\documentclass[12pt,english]{amsart}
\usepackage{ulem} 
\normalem 
\textheight=9in
\textwidth=6.5in
\headheight6.15pt 
\calclayout
\parskip=0pt plus 1pt 
\usepackage{stmaryrd}
\usepackage{graphicx, epsfig, psfrag}
\usepackage{epstopdf}
\usepackage{color}
\usepackage{enumerate}
\usepackage{mathrsfs}%

%

\newtheorem{rem}{Remark}
\theoremstyle{plain}
\newtheorem{theorem}{Theorem}
\newtheorem{lemma}[theorem]{Lemma}
\newtheorem{corollary}[theorem]{Corollary}

  %


%
%
%
%

\newcommand\A{\mathcal{A}}
\newcommand\HH{{\mathscr{H}}}

\newcommand\T{\mathcal T}
\newcommand\Har{\mathbb H}
\newcommand\RR{\mathbb R}

\newcommand\NNN{\mathcal N}
\newcommand\NNNE{{\mathcal{N}}_e}
\newcommand\NNNEc{{\mathcal{N}}_{\partial \tau \backslash e}}

\newcommand\bg{{g}}

\newcommand\xx{{\boldsymbol x}}

\newcommand\vv{{\boldsymbol v}}
\newcommand\I{\mathcal{I}}

\newcommand{\trih}{\mathcal{T}_h}
\newcommand{\triH}{{\mathcal{T}_H}}

\newcommand{\btriH}{{\partial\mathcal{T}_H}}

\newcommand\dO{\partial\Omega}
\newcommand\dtau{{\partial\tau}}
\newcommand\OO{\Omega}

\newcommand\amin{a_{\text{min}}}
\newcommand\amax{a_{\text{max}}}
\newcommand\rhomin{\rho_{\text{min}}}
\newcommand\rhomax{\rho_{\text{max}}}



\newcommand\tLambda{{\widetilde\Lambda}_h}
\newcommand\Lambdams{{\Lambda_h^{\rm{ms}}}}

\newcommand\tlambda{\widetilde\lambda_h}

\newcommand\tmu{{\widetilde\mu}_h}

\newcommand\tnu{{\widetilde\nu}_h}




\def\div{\operatorname{div}}

\def\bgrad{\operatorname{\boldsymbol{\operatorname{\nabla}}}} 
\def\span{\operatorname{span}}



\definecolor{bluegreen}{rgb}{0,0.75,0.75}

%
%



%


\usepackage{lipsum}
\usepackage{amsfonts}
\usepackage{graphicx}
\usepackage{epstopdf}
\usepackage{algorithmic}
\ifpdf
  \DeclareGraphicsExtensions{.eps,.pdf,.png,.jpg}
\else
  \DeclareGraphicsExtensions{.eps}
\fi

  

\begin{document}
\title[Spectral ACMS]{Spectral ACMS: A robust localized Approximated Component Mode Synthesis Method}
\date{May 19, 2025}
\author{Alexandre L. Madureira}
\address{
Laborat\'orio Nacional de Computa\c c\~ao Cient\'\i fica, Petr\'opolis - RJ, Brazil} 
\address{
Funda\c c\~ao Get\'ulio Vargas, Rio de Janeiro - RJ, Brazil} 
\email{alm@lncc.br, alexandre.madureira@fgv.br}
\author{Marcus Sarkis}
\address{
Mathematical Sciences Department
Worcester Polytechnic Institute, USA}%
\email{msarkis@wpi.edu}

\title{Spectral ACMS: A robust localized Approximated Component Mode Synthesis Method}
  
\thanks{This material is based upon work supported by the National Science Foundation under Grant No. DMS-1929284 while the authors were in residence at the Institute for Computational and Experimental Research in Mathematics in Providence, RI, during the semester program ``Numerical PDEs: Analysis, Algorithms, and Data Challenges.''  Some of the results were stated without proofs in the conference paper~\cite{zbMATH07621527}.  The first author was supported by the FAPERJ grants E-26/210.128/2022, E-26/211.143.2021, E-26/210.162/2019. The second author was supported by National Science Foundation (Grant No. NSF-MPS 1522663). }

\begin{abstract}
We consider finite element methods of multiscale type to approximate solutions for two-dimensional symmetric elliptic partial differential equations with heterogeneous $L^\infty$ coefficients. The methods are of Galerkin type and follow the Variational Multiscale and Localized Orthogonal Decomposition--LOD approaches in the sense that it decouples  spaces into \emph{multiscale} and \emph{fine} subspaces. In a first method, the multiscale basis functions are obtained by mapping coarse basis functions, based on corners used on primal iterative substructuring methods, to functions of global minimal energy. This approach delivers quasi-optimal a priori error energy approximation with respect to the mesh size, but it is not robust with respect to high-contrast coefficients. In a second method, edge modes based on local generalized eigenvalue problems are added to the corner modes. As a result, optimal a priori error energy estimate is achieved which is mesh and contrast independent. The methods converge at optimal rate even if the solution has minimum regularity, belonging only to the Sobolev space $H^1$. 
\end{abstract}

\maketitle

\section{Introduction}
Let $u:\OO\to\RR$ be the weak solution of 
\begin{equation}\label{e:elliptic}
\begin{gathered}
-\div\big(\A\bgrad u\big)=f\quad\text{in }\OO,
\\
u=0\quad\text{on }\partial\OO,
\end{gathered}
\end{equation}
where $\OO\subset\RR^2$, and is an open bounded domain with polygonal boundary $\partial\Omega$, the symmetric tensor $\A\in[L^\infty(\OO)]_{\text{sym}}^{2\times2}$ is uniformly positive definite almost everywhere, and $f\in L^2(\Omega)$ is given. For almost all $\xx\in\OO$ let the positive constants $\amin$ and $\amax$ be such that 
\begin{equation}\label{e:bounds}
  \amin|\vv|^2
  \le a_-(\xx)|\vv|^2\le\A(\xx)\,\vv\cdot\vv
  \le a_+(\xx)|\vv|^2\le\amax|\vv|^2
\quad\text{for all }\vv\in\RR^2,
\end{equation}
where $a_-(\xx)$ and $a_+(\xx)$ are the smallest and largest eigenvalues of 
$\A(\xx)$. Let $\rho\in L^\infty(\Omega)$\label{p:rhodef}
be chosen by the user and such that $\rho(\xx)\in[\rhomin,\rhomax]$ almost everywhere for some positive constants $\rhomin$ and $\rhomax$. Consider $\bg$ such that
\[
f=\rho\bg,\label{p:bg}
\]
and then the $\rho$-weighted $L^2(\Omega)$ norm\label{p:rholtnorm} $\|\bg\|_{L_\rho^2(\Omega)}:=\|\rho^{1/2}\bg\|_{L^2(\Omega)}=\|f\|_{L_{1/\rho}^2(\Omega)}$ is finite. The introduction of the weight $\rho$ is to balance $u$ and $f$ with respect to the tensor $\A$, adding flexibility to the method and making error estimates more meaningful. It allows for a \emph{fairer measure} of the error when the right hand side of error estimates depends on $f$ only. Also, for high-contrast problems, it might compensate for local low coercivity of $\A$; see the end of Section~\ref{s:highcase} for more details. We note that a related numerical work, without any proofs,  was presented in the conference paper~\cite{zbMATH07621527}.  Here the main goal is the corresponding analysis as sharp as possible without any hidden constants.

 For $v$, $w\in H^1(\Omega)$ let 
\[
a(v,w)=\int_\Omega\A \bgrad v\cdot\bgrad w\,d\xx,
\]
and denote by $(\cdot,\cdot)$ the $L^2(\Omega)$ inner product.

Although  in general the solution $u$ of~\eqref{e:elliptic} only belongs to the
Sobolev space $H^1(\Omega)$, a priori error analyses of multiscale methods established on the literature often rely on solution regularity; see~\cite{MR1286212,MR2684351,MR1979846,MR2058933,MR3109775,HMV,MR3225627,MR3704855,MR1642758,MR2383203,MR2030161} and references therein.

Considering the low contrast case, some methods require minimum regularity, as the generalized finite element methods~\cite{MR2801210}, the rough polyharmonic splines~\cite{MR3177856}, the variational multiscale method~\cite{MR1660141,MR2300286}, the Localized Orthogonal Decomposition (LOD)~\cite{MR4191211,MR3246801,MR2831590,MR3591945} and the oversampling Multiscale Finite Element Method~\cite{MR3123820}. The general idea is to decompose the solution spaces as a direct sum of \emph{fine} (local) and \emph{multiscale} (low dimensional, nonlocal) spaces. The final approximate solution belongs to the multiscale space. The LOD approximation~\cite{hellman17_mms,MR3552482} also  works for the high contrast cases when the local Poincar\'e inequality is not large; see Remark~\ref{r:rem1}.

However, there are several domain decomposition solvers that are optimal with respect to mesh and contrast, relying on coarse basis functions from local generalized eigenvalue problems. The \emph{adaptive choice of primal constraints} method was introduced to ensure robustness with respect to contrast for non-overlapping domain decomposition methods based on FETI-DP~\cite{MR1802366,MR1921914} and BDDC~\cite{MR2047204}. References~\cite{MR3612901,MR3582898,DP2013,MR3678572,MR3350292,MR3303686,MR3546980,MR2030628,MR2277024,MR2334131,ZAMPINI,MR3089678} elaborate on this approach. For overlapping domain decomposition see~\cite{MR3033238,MR2916377,MR2718268,MR3350292,MR3175183}. Some of this ideas were incorporated in~\cite{CHUNG2018298,Chung2018} to obtain discretizations that depend only logarithmically on the contrast.

In~\cite{MR4238010} we introduced the Localized Spectral
Decomposition--LSD method for mixed and hybrid-primal methods~\cite{MR0431752}, that is,
we re-frame the LOD version in~\cite{MR3591945} into
the non-overlapping domain decomposition framework, and consider the
Multiscale Hybrid Method--MHM~\cite{AHPV,HMV,HPV}, which falls in the
BDDC and FETI-DP classes, and then explore adaptive choice of primal
constraints to generate the multiscale basis functions. We obtain
a discretization that is robust with respect to contrast.

In this paper we propose an \emph{Approximated Component Mode Synthesis}--ACMS type  method; see~\cite{MR1160133,MR1069651,craig1968coupling,MR3350765,MR3225627,MR2666649,hurty1960vibrations}.  The motivation is that known convergence results require extra regularity and are not uniform with respect to contrast~\cite{MR3225627}. The goal here is to develop a discretization that has optimal and robust a priori error approximation, assuming minimum regularity on the solution, $\A$ and $\rho$. 

To consider the LOD approach with Galerkin-Ritz projection, we use conforming primal iterative substructuring techniques~\cite{MR764237,MR842125,MR1367653,MR1302680,MR1113145,MR1109101,MR1857663,MR1469678,MR2104179} rather than BDDC and FETI-DP methods. Two versions are under consideration here, both of Galerkin type and based on edges and local harmonic extensions. The first method is simpler and converges at quasi-optimal rates, even under minimal regularity of the solution. We note, however, this method has a weak singularity at the coarse nodes and its properties deteriorate if the contrast of the coefficients increases. To circumvent these two issues, we modify the method by incorporating solutions of specially designed local eigenfunction problems, yielding optimal convergence rate uniformly with respect to contrast. 

The remainder of the this paper is organized as follows. Section~\ref{s:lod}
describes the substructuring decomposition into interior and interface unknowns, while our methods for low and high contrast coefficients are considered in Sections~\ref{s:lowcase} and~\ref{s:highcase}, respectively. In Section~\ref{s:localproblems}  we consider how to deal with local, elementwise problems. Numerical tests and some of the results of this paper, for the case of high-contrast only, were presented without proofs in~\cite{zbMATH07621527}. 

\section{Substructuring Formulation} \label{s:lod}
Let $\triH$ be a finite element regular partition of $\Omega$ based on triangles, with elements of characteristic length $H>0$. We denote the mesh skeleton by $\btriH$, and denote by $\NNN_H$ the set of nodes on $\btriH\backslash\dO$. For $h<H$, let $\trih$ be a refinement of $\triH$, in the sense that every (coarse) edge in $\btriH$ is a union of edges of elements in $\trih$. Let $\NNN_h$ be the set of nodes of $\trih$ on the skeleton $\btriH\backslash\dO$. Therefore, all nodes in $\NNN_h$ belong to edges of elements in $\triH$.

For $v\in H^1(\Omega)$ and a given set of elements $\T\subset\triH$, let 
\begin{equation*}
|v|_{H_\A^1(\Omega)}^2=\|\A^{1/2}\bgrad v\|_{L^2(\Omega)}^2,
\qquad
|v|_{H_\A^1(\T)}^2=\sum_{\tau\in\T}\|\A^{1/2}\bgrad v\|_{L^2(\tau)}^2. 
\end{equation*}
Let $V_h\subset H_0^1(\Omega)$ be the space of continuous piecewise linear functions associated with the fine mesh $\trih$. For the sake of reference, let $u_h\in V_h$ such that
\[
a(u_h,v_h)=(\rho g,v_h)\quad\text{for all }v_h\in V_h. 
\]
We assume that $u_h$ approximates $u$ well. Our numerical schemes yield good approximations for $u_h$ without ever computing it. As in~\cite{MR3123820}, we use such solution $u_h$ as a reference to estimate the error $u_h - u_h^{\rm{ms}}$. That is a departure from classical results where the error $u - u^{\rm{ms}}$ is first estimated. Since $u^{\rm{ms}}$ requires solving infinite dimensional problems, the $u^{\rm{ms}}$ is replaced by $u_h^{\rm{ms}}$ and the error $u^{\rm{ms}} - u_h^{\rm{ms}}$ is then estimated and which requires again density arguments. In summary, our approach analyze what it is implemented, the analysis is simpler and provides sharper estimates  since we just need to estimate the error $u_h - u_h^{\rm{ms}}$ rather than estimating the sum of the errors $u - u^{\rm{ms}}$ and $u^{\rm{ms}} - u_h^{\rm{ms}}$. 

Assume the decomposition $u_h=u_h^B+u_h^\Har$ in its bubble and harmonic components, where $u_h^B\in V_h^B$, $u_h^\Har\in V_h^\Har$, and 
\begin{gather*}
V_h^B=\{v_h\in V_h:\,v_h=0\text{ on }\dtau,\,\tau\in\triH\},
\\
V_h^\Har=\{u_h^\Har\in V_h:\,a(u_h^\Har,v_h^B)=0\text{ for all }v_h^B\in V_h^B\}, 
\end{gather*}
i.e., $V_h^\Har=(V_h^B)^{\perp_a}$. It follows immediately from the definitions that
\begin{equation}\label{e:splitpdes}
a(u_h^\Har,v_h^\Har)=(\rho g,v_h^\Har)\quad\text{for all }v_h^\Har\in V_h^\Har, 
\qquad
a(u_h^B,v_h^B)=(\rho g,v_h^B)\quad\text{for all }v_h^B\in V_h^B. 
\end{equation}
The problems for the bubble solution $u_h^B$ are local and uncoupled and are considered in Section~\ref{s:localproblems}.

We now proceed to approximate $u_h^\Har$, and start by noting that the functions in $V_h^\Har$ are uniquely determined by their traces on the boundary of elements in $\triH$. Let 
\[
\Lambda_h=\{v_h|_{\btriH}:v_h\in V_h^\Har\}\subset H^{1/2}(\btriH), 
\]
and the \emph{local} discrete-harmonic extension operator $T:\Lambda_h\to V_h^\Har$ such that, for $\mu_h\in\Lambda_h$, 
\begin{equation}\label{e:definitionT}
  (T\mu_h)|_{\btriH}=\mu_h, 
  \qquad \text{and} \qquad
  a(T\mu_h,v_h^B)=0\quad\text{for all }v_h^B\in V_h^B. 
\end{equation}
Define the bilinear forms $s_\tau$, $s:\Lambda_h\times\Lambda_h\to\RR$ such that, for $\mu_h$, $\nu_h\in\Lambda_h$, 
\[
s_\tau(\mu_h,\nu_h)
=\int_\tau\A\bgrad T\mu_h\cdot\bgrad T\nu_h\,d\xx\quad\text{ for }\tau\in\triH,
\qquad
s(\mu_h,\nu_h)=\sum_{\tau\in\triH}s_\tau(\mu_h,\nu_h). 
\]
Let $\lambda_h=u_h|_{\btriH}$. Then $u_h^\Har=T\lambda_h$ and
\begin{equation}\label{e:pde}
s(\lambda_h,\mu_h)=(\rho g,T\mu_h)\quad\text{for all }\mu_h\in\Lambda_h. 
\end{equation}

\section{The Low-Contrast Multiscale Case} \label{s:lowcase}
We now propose a scheme to approximate~\eqref{e:pde} based on
LOD techniques. Define the fine-scale subspace $\tLambda\subset\Lambda_h$ by 
\begin{equation*}
\tLambda=\{\tlambda\in\Lambda_h:\,\tlambda(\xx_i)=0\text{ for all }\xx_i\in\NNN_H\}. 
\end{equation*}
Let the multiscale space $\Lambdams\subset\Lambda_h$ be such that $\tLambda\perp_s\Lambdams$ and $\Lambda_h=\tLambda\oplus\Lambdams$. Our numerical method is defined by $\lambda_h^{\rm{ms}}\in\Lambdams$ such that
\begin{equation}\label{e:lambdamsdef}
s(\lambda_h^{\rm{ms}},\mu_h^{\rm{ms}})=(\rho g,T\mu_h^{\rm{ms}})\quad\text{for all }\mu_h^{\rm{ms}}\in\Lambdams, 
\end{equation}
and we set $u_h^{\rm{ms}}=T\lambda_h^{\rm{ms}}$ as an approximation for $u_h^\Har$. 

To make the definition of $\Lambdams$ explicit, let the coarse-scale
space $\Lambda_H \subset \Lambda_h$ be the trace of piecewise continuous linear functions on the $\btriH$ triangulation. Thus, a function $\lambda_H\in\Lambda_H$ is uniquely determined by its nodal values and is linear on each edge. A basis $\{\theta_H^i\}_{i=1}^{\#\NNN_H}$ for $\Lambda_H$ can be obtained by imposing that  $\theta_H^i$ be continuous and piecewise linear on $\btriH$ and $\theta_H^i(\xx_j)=\delta_{ij}$ for all $\xx_j\in\NNN_H$. The support of $\theta_H^i$ is on all edges of elements $\tau\in\triH$ for which $\xx_i\in\bar\tau$. If $\mu_H=\sum_{i=1}^{\#\NNN_H}\mu_H(\xx_i) \theta_H^i$ is such that $\mu_H(\xx_i) = 0$ for all $\xx_i\in\NNN_H$, then $\mu_H(\xx)=0$ for all $\xx \in \NNN_h$. Hence, $\Lambda_h=\Lambda_H\oplus\tLambda$, and then $\dim\Lambda_H=\dim\Lambdams$.

Now, for each $K\in\triH$ and $\nu_h\in \Lambda_h$, let $P^K:\Lambda_h\to\tLambda$ be such that 
\begin{equation}\label{e:Ptaudef}
s(P^K\nu_h,\tmu)=s_K(\nu_h,\tmu)\quad\text{for all }\tmu\in\tLambda, 
\end{equation}
and $P:\Lambda_h\to\tLambda$ be such that
\begin{equation}\label{e:Pdef}
P\nu_h=\sum_{K\in\triH}P^K\nu_h.
\end{equation}
Note that
\[
s(P\nu_h,\tmu)
=\sum_{K\in\triH}s(P^K\nu_h,\tmu)
=\sum_{K\in\triH}s_K(\nu_h,\tmu)
=s(\nu_h,\tmu). 
\]
It follows from the above that $\Lambdams=\{(I-P)\theta_H:\,\theta_H\in\Lambda_H\}$.
A basis for $\Lambdams$ is defined by
$\lambda_i^{\rm{ms}}=(I-P)\theta_H^i \in\Lambdams$, and  by construction,
$\lambda_i^{\rm{ms}}(\xx_j)=\delta_{ij}$ for all $\xx_j\in\NNN_H$. 

An alternative to~\eqref{e:lambdamsdef} is to find $\lambda_H\in\Lambda_H$ such that 
\begin{equation}\label{e:lambdazerodef}
s\bigl((I-P)\lambda_H,(I-P)\mu_H\bigr)=(\rho g,T(I-P)\mu_H)\quad\text{for all }\mu_H\in\Lambda_H, 
\end{equation}
and then $\lambda_h^{\rm{ms}}=(I-P)\lambda_H$. We name it as ACMS--NLOD (\emph{Approximated Component Mode Synthesis Non-Localized Orthogonal Decomposition }) method. 

Albeit being well-defined, the method~\eqref{e:lambdazerodef} is not ``practical'', in the sense that the operators $P^K$ and  $P$ are nonlocal, and computing~\eqref{e:Ptaudef} is as hard as solving~\eqref{e:elliptic}. To circumvent that, we use the fact that the solutions of~\eqref{e:Ptaudef} actually decay exponentially to zero away from $K$. That allows the definition of a local approximation $P^{K,j}$ for $P^K$, having support at a patch of width $j$ around $K$. Next, before proving the exponential decay, we investigate the convergence rates for the ideal nonlocal solution $u_h^{\rm{ms}}$. 

In what follows, $\gamma_1$, $\gamma_2$, etc denote positive constants that do not depend on $\A$, $f$, $\rho$, $h$ and $H$, depending only on the shape regularity of elements on $\trih$ and $\triH$. Let 
\[
\quad\kappa=\max_{\tau\in\triH}\kappa^\tau,
\quad \kappa^\tau=\frac{\amax^\tau}{\amin^\tau},
\quad \amax^\tau=\sup_{\xx\in\tau}a_+(\xx),
\quad\amin^\tau=\inf_{\xx\in\tau}a_-(\xx),\]
\[
 \rhomax^\tau=\sup_{\xx\in\tau}\rho(\xx)
\quad \text{and} \quad \rhomin^\tau=\inf_{\xx\in\tau}\rho(\xx).
\]
Let us introduce the global Poincar\'e's inequality constant $C_{P,G}$
which is the smallest constant such that for all $\mu_h \in \Lambda_h$
\begin{equation}\label{e:umsest}
\|T\mu_h\|_{L_\rho^2(\Omega)}\le C_{P,G}|T\mu_h|_{H_\A^1(\Omega)}. 
\end{equation}
Let us also introduce the local Poincar\'e's inequality constant $c_{P,L}=\max_{\tau\in\triH}c_{P,L}^\tau$, where the $c_{P,L}^\tau$ are the smallest
constants such that 
\begin{equation}\label{e:lpi}
\|T\tmu\|_{L_\rho^2(\tau)}
\le c_{P,L}^\tau H|T\tmu|_{H_\A^1(\tau)}\quad\text{for all }\tmu\in\tLambda. 
\end{equation}
\begin{lemma}\label{l:WPI2}
Let $\tau\in\triH$ and $c_{P,L}^\tau$ as in~\eqref{e:lpi}. Then, an upper bound for $c_{P,L}^\tau$ is given by
\begin{equation} \label{e:poincare}
  (c_{P,L}^\tau)^2\le\gamma_1\left(1 +\log(H/h)\right)\frac{\rhomax^\tau}{\amin^\tau}.
\end{equation}
\end{lemma}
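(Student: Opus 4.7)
The plan is to reduce the weighted inequality to an unweighted one via pointwise coefficient bounds, and then to establish the unweighted inequality by decomposing $T\tmu|_\tau$ into a standard (Laplacian) discrete-harmonic extension of its boundary trace plus an interior bubble correction. A classical log-weighted substructuring estimate handles the first piece, while a plain Poincar\'e inequality on $H_0^1(\tau)$ handles the second.

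For the coefficient reduction, since $\rho(\xx)\le\rhomax^\tau$ a.e.\ on $\tau$ and $\A(\xx)\ge\amin^\tau I$ pointwise, one has
\begin{equation*}
\|T\tmu\|_{L_\rho^2(\tau)}^2\le\rhomax^\tau\|T\tmu\|_{L^2(\tau)}^2,\qquad \amin^\tau|T\tmu|_{H^1(\tau)}^2\le|T\tmu|_{H_\A^1(\tau)}^2,
\end{equation*}
so it suffices to prove the unweighted bound
\begin{equation*}
\|T\tmu\|_{L^2(\tau)}^2\le\gamma_1 H^2\bigl(1+\log(H/h)\bigr)\,|T\tmu|_{H^1(\tau)}^2,
\end{equation*}
multiplication by $\rhomax^\tau/\amin^\tau$ then yielding the lemma.

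Next, introduce the auxiliary function $\hat v\in V_h$ supported in $\tau$ that is Laplacian-discrete-harmonic on $\tau$ with boundary trace $\tmu|_{\partial\tau}$. Since $\hat v$ minimizes the unweighted $H^1$-seminorm among extensions of $\tmu|_{\partial\tau}$, one immediately has $|\hat v|_{H^1(\tau)}\le|T\tmu|_{H^1(\tau)}$. The main step, which I expect to be the technical obstacle, is the classical iterative-substructuring estimate: because $\tmu$ vanishes at all corners of $\tau$ (as $\tmu\in\tLambda$),
\begin{equation*}
\|\hat v\|_{L^2(\tau)}^2\le C H^2\bigl(1+\log(H/h)\bigr)\,|\hat v|_{H^1(\tau)}^2.
\end{equation*}
This is a log-weakened Poincar\'e inequality whose proof combines the discrete Sobolev inequality on $\partial\tau$ for piecewise-linear traces vanishing at a vertex, the $H^{1/2}$--$H^1$ trace theorem, and the energy-minimality of $\hat v$; it is a well-known tool in the substructuring literature and can be invoked from, e.g.,~\cite{MR2104179}.

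To conclude, the bubble correction $b:=T\tmu-\hat v$ belongs to $V_h^B$ and vanishes on $\partial\tau$, so the standard Poincar\'e inequality on $H_0^1(\tau)$ gives
\begin{equation*}
\|b\|_{L^2(\tau)}^2\le C H^2|b|_{H^1(\tau)}^2\le 2C H^2\bigl(|T\tmu|_{H^1(\tau)}^2+|\hat v|_{H^1(\tau)}^2\bigr)\le 4C H^2|T\tmu|_{H^1(\tau)}^2,
\end{equation*}
after using the minimality of $\hat v$. Combining with the log-Poincar\'e bound on $\hat v$ via the triangle inequality $\|T\tmu\|_{L^2(\tau)}^2\le 2\|\hat v\|_{L^2(\tau)}^2+2\|b\|_{L^2(\tau)}^2$ yields the unweighted inequality, and the coefficient scaling above delivers the lemma. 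A key feature of this route is that the contrast $\kappa^\tau$ is never invoked: $\hat v$ is defined with respect to the \emph{Laplacian} rather than $\A$, so its log-Poincar\'e estimate does not see the anisotropy, and the bounds on $T\tmu$ inherit only the $\rhomax^\tau/\amin^\tau$ factor stated in~\eqref{e:poincare}.
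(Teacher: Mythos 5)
Your proof is correct, but it takes a longer route than the paper's. The paper's entire argument is the three-link chain $\|T\tmu\|_{L_\rho^2(\tau)}^2\le\gamma_1 H^2\rhomax^\tau\|T\tmu\|_{L^\infty(\tau)}^2\le\gamma_1 H^2(1+\log(H/h))\rhomax^\tau|T\tmu|_{H^1(\tau)}^2\le\gamma_1 H^2(1+\log(H/h))\frac{\rhomax^\tau}{\amin^\tau}|T\tmu|_{H_\A^1(\tau)}^2$: it applies the two-dimensional discrete Sobolev inequality of~\cite{MR2104179} \emph{directly} to $T\tmu$, which is legitimate because that inequality is a property of the piecewise-linear space and the mesh alone --- it requires no harmonicity of any kind, only that the function vanish at a vertex of $\tau$ so the $L^2$ term on the right can be dropped. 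Your decomposition of $T\tmu|_\tau$ into a Laplacian-discrete-harmonic part $\hat v$ plus a bubble $b$, with energy minimality for $\hat v$ and plain Poincar\'e for $b$, is therefore superfluous: the log-weighted estimate you invoke for $\hat v$ holds verbatim for $T\tmu$ itself, and once you grant that, the splitting buys nothing (it only introduces the extra factors of $2$ and $4$). Each step of your argument is nonetheless sound --- $T\tmu|_\tau$ is an admissible competitor for $\hat v$, the bubble lies in $H_0^1(\tau)$, and the coefficient reduction via $\rho\le\rhomax^\tau$ and $\A\ge\amin^\tau I$ matches the paper's first and last inequalities exactly --- so the result follows. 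One small credit to your version: it makes explicit why the contrast $\kappa^\tau$ never enters this lemma (in contrast to Lemmas~\ref{l:WPI} and~\ref{l:half_00rho}, where comparison with the identity-coefficient extension $T_\I$ forces a $\kappa^\tau$ factor), a point the paper leaves implicit.
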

\begin{proof}
Using that $\tmu$ vanishes at the $\NNN_H$ nodes, we have~\cite{MR2104179} 
\begin{multline*}
\|T\tmu\|_{L_\rho^2(\tau)}^2
\le\gamma_1H^2\rhomax^\tau\|T\tmu\|_{L^\infty(\tau)}^2
\\
\le\gamma_1H^2\left(1 +\log(H/h)\right)\rhomax^\tau|T\tmu|_{H^1(\tau)}^2
\le\gamma_1H^2\left(1 +\log(H/h)\right)\frac{\rhomax^\tau}{\amin^\tau}|T\tmu|_{H_\A^1(\tau)}^2.
\end{multline*}
\end{proof}
\begin{lemma}\label{l:WPI}
Given $\mu_h\in\Lambda_h$ let $I_H\mu_h\in\Lambda_H$ be its Lagrange $\NNN_H$-nodal linear interpolation on $\btriH$. Then 
\begin{equation}\label{e:intenergstab}
|T I_H\mu_h|_{H_\A^1(\Omega)}^2
\le\gamma_2\kappa\bigl(1 +\log(H/h)\bigr)|T\mu_h|_{H_\A^1(\Omega)}^2. 
\end{equation}
\end{lemma}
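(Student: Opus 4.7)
The plan is to prove the bound element-by-element on the coarse mesh $\triH$ and then sum over $\tau \in \triH$. Fix a coarse element $\tau$. Since $T I_H\mu_h|_\tau$ is the discrete $\A$-harmonic extension of the trace $I_H\mu_h|_{\partial\tau}$, Dirichlet's principle gives $|T I_H\mu_h|_{H_\A^1(\tau)}^2 \le |v|_{H_\A^1(\tau)}^2$ for every $v$ in the fine finite-element space with $v|_{\partial\tau} = I_H\mu_h|_{\partial\tau}$. A natural competitor is the unique affine function $L_\tau$ on $\tau$ satisfying $L_\tau(x_i)=\mu_h(x_i)$ at the three vertices $x_i$ of $\tau$; because $I_H\mu_h$ is already linear on each coarse edge, $L_\tau|_{\partial\tau} = I_H\mu_h|_{\partial\tau}$, and $L_\tau$ itself belongs to the fine space.

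To exploit that $\A$-harmonic extensions preserve constants, I would choose $c_\tau$ as the $L^2$-average of $T\mu_h$ over $\tau$ and use $L_\tau - c_\tau$ as competitor for $T(I_H\mu_h - c_\tau)=T I_H\mu_h - c_\tau$. Because $L_\tau - c_\tau$ is affine with vertex values $\mu_h(x_i) - c_\tau$, shape regularity of $\tau$ yields
\[
|T I_H\mu_h|_{H_\A^1(\tau)}^2 \le \amax^\tau\, |L_\tau - c_\tau|_{H^1(\tau)}^2 \le \gamma_3\, \amax^\tau \sum_{i} |\mu_h(x_i) - c_\tau|^2,
\]
where the sum runs over the three vertices of $\tau$ (and $\mu_h(x_i)=0$ at any vertex lying on $\partial\Omega$).

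The decisive step is then to control each $|\mu_h(x_i) - c_\tau|^2 = |T\mu_h(x_i) - c_\tau|^2$ by the discrete Sobolev inequality (in the form used in~\cite{MR2104179}), combined with Poincar\'e's inequality to absorb the $L^2$ term produced by the Sobolev estimate through the choice of $c_\tau$. This yields
\[
|T\mu_h(x_i) - c_\tau|^2 \le \gamma_4 (1+\log(H/h))\, |T\mu_h|_{H^1(\tau)}^2 \le \gamma_4 (1+\log(H/h))\, (\amin^\tau)^{-1}\, |T\mu_h|_{H_\A^1(\tau)}^2,
\]
uniformly over all three vertices. Combining the two displays with $\kappa^\tau = \amax^\tau/\amin^\tau$ gives the local estimate, and summing over $\tau \in \triH$ with $\kappa = \max_\tau \kappa^\tau$ completes the proof of~\eqref{e:intenergstab}.

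The main obstacle is the discrete Sobolev step, which delivers the unavoidable $(1+\log(H/h))$ factor characteristic of pointwise nodal estimates in two dimensions and requires Poincar\'e's inequality to eliminate the $L^2$ contribution. The contrast factor $\kappa$ is equally inescapable, arising from the conversion between the weighted $\A$-energy of $T I_H\mu_h$ and the unweighted $H^1$-energy of the affine competitor $L_\tau$.
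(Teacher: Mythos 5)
Your argument is correct and follows essentially the same route as the paper: localize to each coarse element, use the energy-minimizing property of the discrete $\A$-harmonic extension against a competitor with the same trace on $\dtau$, convert between the $\A$-weighted and unweighted $H^1$-seminorms at the cost of $\kappa^\tau=\amax^\tau/\amin^\tau$, and extract the $(1+\log(H/h))$ factor from a discrete Sobolev (nodal-value) estimate. The only real difference is that the paper passes through the extension $T_\I$ with identity coefficients and invokes the interpolant-stability bound $|T_\I I_H\mu_h|_{H^1(\tau)}^2\le\gamma_2\bigl(1+\log(H/h)\bigr)|T_\I\mu_h|_{H^1(\tau)}^2$ from \cite{MR2104179} as a black box, whereas you re-derive precisely that bound by hand via the affine competitor $L_\tau-c_\tau$ combined with the discrete Sobolev and Poincar\'e inequalities.
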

\begin{proof}
  Let $T_\I$ be defined by~\eqref{e:definitionT} with $\A = \I$, the identity matrix. We first note that $|T_\I\mu_h|_{H^1(\tau)}\le|T \mu_h|_{H^1(\tau)}$, since $T_\I\mu_h$ minimizes $|\cdot|_{H^1(\tau)}$ over all functions in $H^1(\tau)$ with trace equal to $\mu_h$ over the element boundary. It follows~\cite{MR2104179} for each $\tau\in\triH$ that 
\begin{multline*} 
|T I_H\mu_h|_{H_\A^1(\tau)}^2
\le|T_\I I_H\mu_h|_{H_\A^1(\tau)}^2
\le\amax^\tau |T_\I I_H\mu_h|_{H^1(\tau)}^2
\le\gamma_2\amax^\tau\bigl(1 +\log(H/h)\bigr)|T_\I\mu_h|_{H^1(\tau)}^2
\\
\le\gamma_2\amax^\tau\bigl(1 +\log(H/h)\bigr)|T \mu_h|_{H^1(\tau)}^2
\le\gamma_2\kappa^\tau\bigl(1 +\log(H/h)\bigr)|T\mu_h|_{H_\A^1(\tau)}^2.  
\end{multline*}
\end{proof} 

We know extend the \emph{Face Lemma}~\cite[Subsection 4.6.3]{MR2104179} to variable coefficients.
\begin{lemma}\label{l:half_00rho}
Let $\tau\in\triH$, $e$ an edge of $\partial \tau$ and $\chi_e$ be the characteristic function of $e$ being identically equal to one on $e$ and zero on $\partial \tau \backslash e$. Then given $\tmu\in\tLambda$ we have
 \[
|T\chi_e\tmu|_{H_\A^1(\tau)}^2
\le\gamma_3\kappa^\tau\bigl(1 +\log(H/h)^2\bigr)|T\tmu|_{H_\A^1(\tau)}^2.
\]
\end{lemma}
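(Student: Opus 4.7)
The plan is to reduce the variable-coefficient statement to the constant-coefficient Face Lemma in Toselli--Widlund by the same coefficient-sandwiching trick used in Lemma~\ref{l:WPI}. Throughout, fix $\tau \in \triH$ and write $T_\I$ for the extension operator defined by~\eqref{e:definitionT} with $\A$ replaced by the identity $\I$.

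First, I would use the Dirichlet (minimum energy) characterization of $T$: since $T\chi_e\tmu$ minimizes the $H_\A^1(\tau)$ seminorm among all $v_h \in V_h$ with trace $\chi_e\tmu$ on $\partial\tau$, I get
\[
|T\chi_e\tmu|_{H_\A^1(\tau)}^2 \le |T_\I \chi_e\tmu|_{H_\A^1(\tau)}^2 \le \amax^\tau\,|T_\I \chi_e\tmu|_{H^1(\tau)}^2,
\]
where the last step uses $\A(\xx) \le \amax^\tau\I$ a.e.\ on $\tau$. Now $T_\I\chi_e\tmu$ and $T_\I\tmu$ are the standard (Laplacian) discrete harmonic extensions of their traces, so the Face Lemma in~\cite{MR2104179}*{Subsection~4.6.3} applies to give
\[
|T_\I \chi_e\tmu|_{H^1(\tau)}^2 \le \gamma_3\bigl(1+\log(H/h)\bigr)^2\,|T_\I \tmu|_{H^1(\tau)}^2.
\]

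Next, I would transfer back to the $\A$-energy on the right-hand side. Since $T_\I\tmu$ minimizes the $H^1(\tau)$ seminorm among functions with trace $\tmu$ on $\partial\tau$, we have $|T_\I\tmu|_{H^1(\tau)}^2 \le |T\tmu|_{H^1(\tau)}^2$, and the bound $\A(\xx) \ge \amin^\tau\I$ yields
\[
|T\tmu|_{H^1(\tau)}^2 \le \frac{1}{\amin^\tau}|T\tmu|_{H_\A^1(\tau)}^2.
\]
Chaining the three estimates and using $\kappa^\tau = \amax^\tau/\amin^\tau$ produces exactly
\[
|T\chi_e\tmu|_{H_\A^1(\tau)}^2 \le \gamma_3\,\kappa^\tau\bigl(1+\log(H/h)\bigr)^2\,|T\tmu|_{H_\A^1(\tau)}^2,
\]
as required.

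The main obstacle is really only bookkeeping: one must be careful that at each step the Dirichlet minimization is applied in the correct direction (so that every inequality goes ``the right way''), and that $\chi_e\tmu$ is indeed an admissible trace, i.e.\ still belongs to the fine-scale trace space and in particular vanishes at the $\NNN_H$ nodes of $\partial\tau$, so that the constant-coefficient Face Lemma is genuinely applicable. Beyond that, the argument is a direct adaptation of the one given for Lemma~\ref{l:WPI}, and no new analytic ingredient beyond the classical Face Lemma is needed.
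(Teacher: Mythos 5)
Your proposal is correct and follows essentially the same route as the paper's own proof: energy minimization to pass from $T$ to $T_\I$, the constant-coefficient Face Lemma of Toselli--Widlund, and then the bounds $\A\le\amax^\tau\I$ and $\A\ge\amin^\tau\I$ to sandwich the coefficient, yielding the factor $\kappa^\tau$. The only difference is that you spell out the minimization justifications that the paper leaves implicit.
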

\begin{proof}
We have
\begin{multline*} 
|T \chi_e\tmu|_{H_\A^1(\tau)}^2
\le|T_\I \chi_e\tmu|_{H_\A^1(\tau)}^2
\le\amax^\tau|T_\I\chi_e\tmu|_{H^1(\tau)}^2
\le\gamma_3\amax^\tau\bigl(1 +\log(H/h)\bigr)^2|T_\I\tmu|_{H^1(\tau)}^2
\\
\le \gamma_3\amax^\tau\bigl(1 +\log(H/h)\bigr)^2|T\tmu|_{H^1(\tau)}^2
\le \gamma_3\kappa^\tau\bigl(1 +\log(H/h)\bigr)^2|T\tmu|_{H_\A^1(\tau)}^2.  
\end{multline*}
\end{proof}

  %

\begin{theorem}\label{t:msgal}
Let $\lambda_h=u_h|_\btriH$, and $\lambda_h^{\rm{ms}}$ solution of~\eqref{e:lambdamsdef}. Then 
$\lambda_h-\lambda_h^{\rm{ms}}\in\tLambda$ and
\[
|u_h^\Har-u_h^{\rm{ms}}|_{H_\A^1(\Omega)}\le c_{P,L}H\|g\|_{L^2_{\rho}(\Omega)}, 
\]
where we recall that $u_h^\Har=T\lambda_h$ and $u_h^{\rm{ms}}=T\lambda_h^{\rm{ms}}$.
\end{theorem}
\begin{proof}
First note that $\lambda_h-\lambda_h^{\rm{ms}}\in\tLambda$ since it follows from the Galerkin orthogonality that $s(\lambda_h-\lambda_h^{\rm{ms}},\mu_h^{\rm{ms}})=0$ for all $\mu_h^{\rm{ms}}\in\Lambdams$. Using the local Poincar\'e's inequality~\eqref{e:lpi} we obtain  
\begin{multline*}
|u_h^\Har-u_h^{\rm{ms}}|_{H_\A^1(\Omega)}^2
=s(\lambda_h-\lambda_h^{\rm{ms}},\lambda_h-\lambda_h^{\rm{ms}})
=s(\lambda_h-\lambda_h^{\rm{ms}},\lambda_h)
=\bigl(\rho g,T(\lambda_h-\lambda^{\rm{ms}})\bigr)
\\
\le\|g\|_{L_\rho^2(\Omega)}\|T(\lambda_h-\lambda_h^{\rm{ms}})\|_{L^2_\rho(\Omega)}
\le c_{P,L}H\|g\|_{L_\rho^2(\Omega)}|T(\lambda_h-\lambda_h^{\rm{ms}})|_{H_\A^1(\Omega)}, 
\end{multline*}
and the result follows.
\end{proof}

\subsection{Decaying Results for Low-Contrast coefficients} \label{ss:lowcon}
We next prove exponential decay of $P^K\nu_h$ for $K\in\triH$. Denote
\[
\T_1(K)=\{K\}, 
\qquad
\T_{j+1}(K)
=\{\tau\in\triH:\,\overline\tau\cap\overline\tau_j\ne\emptyset\text{ for some }\tau_j\in\T_j(K)\}. 
\]
The following estimate is fundamental to prove exponential decay.

\begin{lemma}\label{l:decay}
Assume that $K\in\triH$ and $\nu_h\in\Lambda_h$, and let $\tilde\phi_h=P^K\nu_h\in\tLambda$. Then, for any integer $j\ge 1$, 
\begin{equation*}
|T\tilde{\phi}_h|_{H_\A^1(\triH\backslash\T_{j+1}(K))}^2
\le 9 \alpha |T\tilde{\phi}_h|_{H_\A^1(\T_{j+1}(K)\backslash\T_j(K))}^2, 
\end{equation*}
where $\alpha=\gamma_3\kappa (1+ \log(H/h))^2$. 
\end{lemma}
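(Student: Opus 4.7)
The idea is to construct a test function $\tmu\in\tLambda$ that vanishes identically on $\partial K$. By the defining relation~\eqref{e:Ptaudef} for $\tilde\phi_h=P^K\nu_h$, such a choice forces $s(\tilde\phi_h,\tmu)=s_K(\nu_h,\tmu)=0$, and splitting this identity elementwise will isolate the outside energy $|T\tilde\phi_h|^2_{H_\A^1(\triH\setminus\T_{j+1}(K))}$ as (the negative of) a sum over the transition layer $\T_{j+1}(K)\setminus\T_j(K)$; that layer sum is then controlled by Lemma~\ref{l:half_00rho}.

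For the construction I would let $\eta_H$ be the continuous piecewise linear function on $\triH$ taking value $0$ at every vertex of an element in $\T_j(K)$ and value $1$ at every other vertex. By the very definition of $\T_{j+1}(K)$ one has $\eta_H\equiv 0$ on each $\tau\in\T_j(K)$, $\eta_H\equiv 1$ on each $\tau\in\triH\setminus\T_{j+1}(K)$, and $\eta_H\in[0,1]$ on the layer $\T_{j+1}(K)\setminus\T_j(K)$. I then define $\tmu\in\Lambda_h$ by $\tmu(\xx):=\eta_H(\xx)\tilde\phi_h(\xx)$ for $\xx\in\NNN_h$ and extend linearly along each fine edge of $\btriH$. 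Because $\tilde\phi_h$ vanishes on $\NNN_H$, so does $\tmu$, which places $\tmu\in\tLambda$; and since $\eta_H\equiv 0$ on $\partial K$, $\tmu|_{\partial K}=0$, so $T\tmu|_K\equiv 0$ and $s_K(\nu_h,\tmu)=0$. Expanding $0=s(\tilde\phi_h,\tmu)=\sum_\tau s_\tau(\tilde\phi_h,\tmu)$ and using $\tmu=\tilde\phi_h$ on $\partial\tau$ for $\tau\in\triH\setminus\T_{j+1}(K)$ and $\tmu=0$ on $\partial\tau$ for $\tau\in\T_j(K)$, one obtains
\[
|T\tilde\phi_h|^2_{H_\A^1(\triH\setminus\T_{j+1}(K))}=-\sum_{\tau\in\T_{j+1}(K)\setminus\T_j(K)} s_\tau(\tilde\phi_h,\tmu),
\]
whose right-hand side is bounded by Cauchy--Schwarz by $|T\tilde\phi_h|_{H_\A^1(\T_{j+1}(K)\setminus\T_j(K))}\,|T\tmu|_{H_\A^1(\T_{j+1}(K)\setminus\T_j(K))}$.

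It remains to establish the elementwise bound $|T\tmu|^2_{H_\A^1(\tau)}\le 9\alpha\,|T\tilde\phi_h|^2_{H_\A^1(\tau)}$ for every $\tau\in\T_{j+1}(K)\setminus\T_j(K)$, which together with the preceding display yields the stated inequality. I would decompose $\tmu|_{\partial\tau}=\sum_{e\subset\partial\tau}\chi_e\tmu|_e$ across the three edges of $\partial\tau$ and estimate each piece by Lemma~\ref{l:half_00rho}: edges where $\eta_H=0$ at both endpoints contribute zero; edges where $\eta_H=1$ at both endpoints satisfy $\chi_e\tmu|_e=\chi_e\tilde\phi_h$, so the Face Lemma directly yields $|T\chi_e\tilde\phi_h|^2_{H_\A^1(\tau)}\le\alpha\,|T\tilde\phi_h|^2_{H_\A^1(\tau)}$; on a mixed edge, where $\eta_H$ is linear from $0$ to $1$ along $e$, the same edgewise bound can be derived by combining the Face Lemma with the pointwise estimate $|\eta_H|\le 1$ and the logarithmic discrete Poincar\'e inequality~\eqref{e:poincare}. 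Squaring the triangle inequality over the three edges via $(a_1+a_2+a_3)^2\le 3(a_1^2+a_2^2+a_3^2)$ then produces the factor $9$. The main obstacle is the mixed-edge case, since Lemma~\ref{l:half_00rho} bounds the edge restriction of a function $\tmu\in\tLambda$ in terms of the \emph{same} function $\tmu$, whereas here one wishes to bound the edge restriction of $I_h(\eta_H\tilde\phi_h)$ in terms of $\tilde\phi_h$; a natural remedy is to replace $T\tmu$ by the competing fine-grid extension $I_h(\eta_H T_\I\tilde\phi_h)$, whose $H_\A^1(\tau)$-energy is controlled by the product rule together with~\eqref{e:poincare}, and use that $T\tmu$ minimizes $H_\A^1(\tau)$-energy among all fine-grid extensions of $\tmu|_{\partial\tau}$.
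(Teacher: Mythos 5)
Your overall strategy coincides with the paper's: test the defining relation~\eqref{e:Ptaudef} with a cut-off copy of $\tilde\phi_h$ supported away from $K$, use $s_K(\nu_h,\cdot)=0$ to convert the outside energy into (minus) a sum over the transition layer $\T_{j+1}(K)\setminus\T_j(K)$, and control the layer terms through the Face Lemma. Your energy identity, the verification that $\tmu\in\tLambda$ and that $\tmu|_{\partial K}=0$, and the reduction to an elementwise bound on the layer are all correct.

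The one place you deviate is the cut-off itself, and that deviation creates the only genuinely nontrivial step of your argument --- the ``mixed edge'' case --- which you identify but do not prove. The paper avoids it entirely by using a sharp cut-off: it takes $\tnu\in\tLambda$ with $\tnu|_{\dtau}=\tilde\phi_h$ for every $\tau\in\triH\setminus\T_{j+1}(K)$ and $\tnu=0$ on all remaining edges; this is well defined and continuous precisely because $\tilde\phi_h$ vanishes at the coarse nodes $\NNN_H$. With that choice, on each layer element $\tau$ the trace $\tnu|_{\dtau}$ decomposes edge by edge into pieces that are either exactly $\chi_e\tilde\phi_h$ or identically zero, so Lemma~\ref{l:half_00rho} applies verbatim to every edge and yields $|T\tnu|^2_{H_\A^1(\tau)}\le 3\sum_{e\subset\dtau}|T(\chi_e\tnu)|^2_{H_\A^1(\tau)}\le 9\alpha\,|T\tilde\phi_h|^2_{H_\A^1(\tau)}$, with no product rule, no interpolation stability, and no Poincar\'e inequality. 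In your version, the mixed-edge term $\chi_e I_h(\eta_H\tilde\phi_h)$ is \emph{not} covered by Lemma~\ref{l:half_00rho} as stated, since that lemma bounds $|T(\chi_e\tmu)|$ by the energy of the \emph{same} function $\tmu$, not by the energy of $\tilde\phi_h$; so the assertion that the edgewise bound ``can be derived by combining the Face Lemma with $|\eta_H|\le1$ and~\eqref{e:poincare}'' is a claim, not an argument. Your fallback --- comparing $T\tmu$ with the competitor $I_h(\eta_H T_\I\tilde\phi_h)$ via energy minimality --- can be made rigorous, but it requires in addition the $H^1$-stability of $I_h(\eta_H\,\cdot\,)$ on the fine mesh (a tool not stated in the paper) and would in any case return a generic constant different from the $\gamma_3$ of Lemma~\ref{l:half_00rho}. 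The simplest repair is to replace your continuous piecewise-linear $\eta_H$ by the sharp cut-off above; the rest of your write-up then goes through unchanged.
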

\begin{proof} 
Choose $\tnu\in\tLambda$ such that $\tnu|_{\dtau}=\tilde{\phi}_h$ if $\tau\in\triH\backslash\T_{j+1}(K)$, and $\tnu=0$ on the remaining edges. We obtain 
\begin{multline*}
|T\tilde{\phi}_h|_{H_\A^1(\triH\backslash\T_{j+1}(K))}^2
=s_K(\tnu,\nu_h)
-\sum_{\tau\in\T_{j+1}(K)\backslash\T_j(K)}s_\tau(\tnu,\tilde{\phi}_h)
=-\sum_{\tau\in\T_{j+1}(K)\backslash\T_j(K)}s_\tau(\tnu,\tilde{\phi}_h)
\\
\le\sum_{\tau\in\T_{j+1}(K)\backslash\T_j(K)}|T\tnu|_{H_\A^1(\tau)}|T\tilde{\phi}_h|_{H_\A^1(\tau)}, 
\end{multline*}
where we used that  $s(\tnu,\tilde{\phi}_h)=s_K(\tnu,\nu_h)=0$ since 
the support of $\tnu$ does not intersect with $K$. For each edge $e$ of $\dtau$, let $\chi_e$ be the
characteristic function of $e$ being identically equal to one on $e$ and
zero on $\partial \tau \backslash e$. For $\tau\in\T_{j+1}(K)\backslash\T_j(K)$, 
\[
|T\tnu|_{H_\A^1(\tau)}^2 
\le3\sum_{e\subset\dtau}|T(\chi_e\tnu)|_{H_\A^1(\tau)}^2
\le9\gamma_3\kappa^\tau (1+ \log H/h)^{2}|T\tilde{\phi}_h|_{H_\A^1(\tau)}^2,
\]
where we have used the \emph{Face Lemma}~\cite[Subsection 4.6.3]{MR2104179}.
\end{proof}
\begin{corollary}\label{c:decay}
 Assume that $K\in\triH$ and $\nu_h\in\Lambda_h$ and let
 $\tilde{\phi}_h=P^K \nu_h\in\tLambda$. Then, for any integer $j\ge 1$, 
\begin{equation*}
  |T\tilde{\phi}_h|_{H_\A^1(\triH\backslash\T_{j+1}(K))}^2
  \le e^{-\frac{j}{1+9\alpha}} |T\tilde{\phi}_h|_{H_\A^1(\triH)}^2, 
\end{equation*}
where $\alpha$ is as in Lemma~\ref{l:decay}. 
\end{corollary}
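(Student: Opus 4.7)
The plan is to turn the one-step energy bound of Lemma~\ref{l:decay} into a geometric decay of the sequence
\[
a_j := |T\tilde{\phi}_h|_{H_\A^1(\triH\backslash\T_{j+1}(K))}^2, \qquad j\ge 0,
\]
and then convert the geometric factor into an exponential one. This is the standard iteration-trick used in LOD-type arguments, so I do not expect any real obstruction; the only ingredients beyond Lemma~\ref{l:decay} are a disjoint-union energy identity and the elementary inequality $1-x\le e^{-x}$.

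First I would observe the disjoint-union identity
\[
\triH\backslash\T_j(K) \;=\; \bigl(\triH\backslash\T_{j+1}(K)\bigr)\,\cup\,\bigl(\T_{j+1}(K)\backslash\T_j(K)\bigr),
\]
so that, since the $H_\A^1$-seminorm is additive over disjoint unions of elements of $\triH$,
\[
a_{j-1} \;=\; a_j \,+\, |T\tilde{\phi}_h|_{H_\A^1(\T_{j+1}(K)\backslash\T_j(K))}^2.
\]
Substituting this into Lemma~\ref{l:decay} gives, for every $j\ge 1$,
\[
a_j \;\le\; 9\alpha\,(a_{j-1}-a_j),
\]
and solving for $a_j$ yields the contraction
\[
a_j \;\le\; \frac{9\alpha}{1+9\alpha}\, a_{j-1}.
\]

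Iterating this inequality $j$ times I obtain $a_j \le \bigl(9\alpha/(1+9\alpha)\bigr)^{j}\,a_0$, and since $\triH\backslash\T_1(K)\subset\triH$ I have the trivial bound $a_0\le |T\tilde{\phi}_h|_{H_\A^1(\triH)}^2$. It remains to compare the geometric factor with the exponential one. Writing
\[
\frac{9\alpha}{1+9\alpha} \;=\; 1-\frac{1}{1+9\alpha}
\]
and applying $1-x\le e^{-x}$ for $x\in(0,1)$ gives $9\alpha/(1+9\alpha)\le e^{-1/(1+9\alpha)}$, hence
\[
\left(\frac{9\alpha}{1+9\alpha}\right)^{\!j} \;\le\; e^{-j/(1+9\alpha)}.
\]
Combining the last three displays gives the stated bound.

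The hardest part is already behind us in Lemma~\ref{l:decay} (that is where the Face Lemma and the contrast enter through $\alpha$); the corollary is essentially a bookkeeping exercise, and the only small subtlety is to be careful that the recursion is valid starting from $j=1$ and that the base quantity $a_0$ is dominated by the full-domain energy, which it trivially is.
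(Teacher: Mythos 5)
Your proof is correct and follows essentially the same route as the paper: both derive the one-step contraction $a_j\le\frac{9\alpha}{1+9\alpha}a_{j-1}$ from Lemma~\ref{l:decay} via the disjoint-union identity, iterate, and use $1-x\le e^{-x}$. The only difference is that you spell out the iteration and the base case $a_0\le|T\tilde{\phi}_h|_{H_\A^1(\triH)}^2$, which the paper leaves implicit.
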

\begin{proof} 
Using Lemma~\ref{l:decay} we have 
\begin{equation*}
|T\tilde{\phi}_h|_{H_\A^1(\triH\backslash{\T}_{j+1}(K))}^2  \le 9\alpha|T\tilde{\phi}_h|_{H_\A^1(\triH\backslash{\T}_{j}(K))}^2-9\alpha|T\tilde{\phi}_h|_{H_\A^1(\triH\backslash{\T}_{j+1}(K))}^2
\end{equation*}
and then
\[
|T\tilde{\phi}_h|_{H_\A^1(\triH\backslash{\T}_{j+1}(K))}^2
\le\frac{9\alpha}{1+9\alpha}|T\tilde{\phi}_h|_{H_\A^1(\triH\backslash{\T}_{j}(K))}^2 \le e^{-\frac{1}{1+9\alpha}} |T\tilde{\phi}_h|_{H_\A^1(\triH\backslash{\T}_{j}(K))}^2,
\]
and the theorem follows.
\end{proof} 

\begin{rem} \label{r:rem1}
  The $\alpha$ in this paper, defined in Lemma~\ref{l:decay}, is estimated as the worst case scenario. For
  particular cases of coefficients $\A$ and $\rho$,  sharper estimates 
  for $\alpha$ can  be derived using weighted
  Poincar\'e inequalities techniques and  partitions of unity
  that conform with $\A$ in order to avoid
  large energies on the interior extensions~\cite{MR1367653,MR2867661,MR3225627,MR3013465,MR3047947,MR2456834,MR2810804,MR2861254}; see~\cite{hellman17_mms,MR3552482} for examples.
\end{rem}

Inspired by the exponential decay stated in Corollary~\ref{c:decay}, we define the operator $P^j$ as follows. First, for a fixed $K\in\triH$, let
\[
\tLambda^{K,j}=\{\tmu\in\tLambda:\,T\tmu=0\text{ on }\triH\backslash\T_j(K)\}. 
\]
Given $\mu_h \in \Lambda_h$, define then $P^{K,j}\mu_h\in\tLambda^{K,j}$ such that 
\[
s(P^{K,j}\mu_h,\tmu)=s_K(\mu_h,\tmu)\quad\text{for all }\tmu\in\tLambda^{K,j}, 
\]
and let
\begin{equation}\label{e:Pjdef}
P^j\mu_h=\sum_{K\in\triH}P^{K,j}\mu_h. 
\end{equation}

We define the approximation $\lambda_H^j\in\Lambda_H$ of $\lambda_H$ by
\begin{equation}\label{e:lambdamsjdef}
s\bigl((I-P^j)\lambda_H^{j},(I-P^j)\mu_H\bigr)
=(\rho g,T(I-P^j)\mu_H)\quad\text{for all }\mu_H\in\Lambda_H, 
\end{equation}
and then let $\lambda_h^{\rm{ms},j}=(I-P^j)\lambda_H^{j}$ and $u^{\rm{ms},j}_h=T\lambda_h^{\rm{ms},j}$. We name the scheme as ACMS--LOD (\emph{Approximated Component Mode Synthesis Localized Orthogonal Decomposition}) method. 

We now analyze the approximation error of the method, starting by a technical result essential to obtain the final estimate. Let $c_\gamma$ be a constant depending only on the shape regularity of $\triH$ such that
\begin{equation}\label{e:cgammadef}
\sum_{\tau\in\triH}|v|_{H^1(\T_{j}(\tau))}^2\le(c_\gamma j)^2|v|_{H^1(\triH)}^2, 
\end{equation}
for all $v\in H^1(\triH)$.

\begin{lemma}\label{l:errorPj}
Consider $\nu_h\in\Lambda_h$ and the operators $P$ defined by~\eqref{e:Pdef} and $P^j$ by~\eqref{e:Pjdef} for $j>1$. Then
\begin{equation*}
|T(P-P^j)\nu_h|_{H_\A^1(\triH)}^2
\le (9c_\gamma j\alpha)^2e^{-\frac{j-2}{1+9\alpha}}|T\nu_h|^2_{H_\A^1(\triH)}.
\end{equation*}
\end{lemma}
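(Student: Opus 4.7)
The plan is to decompose $(P-P^j)\nu_h = \sum_{K\in\triH}\xi_K$ with $\xi_K := (P^K - P^{K,j})\nu_h \in \tLambda$, then execute a duality argument that exploits the local Galerkin orthogonality $\xi_K \perp_s \tLambda^{K,j}$. This orthogonality is immediate since both $P^K\nu_h$ and $P^{K,j}\nu_h$ satisfy $s(\cdot,\tmu) = s_K(\nu_h,\tmu)$ for every $\tmu\in\tLambda^{K,j}$. Testing the definition of $P^K$ against $\tmu=P^K\nu_h$ and Cauchy--Schwarz also yields the stability bound $|TP^K\nu_h|_{H_\A^1(\triH)} \le |T\nu_h|_{H_\A^1(K)}$, which will localize the final estimate to individual coarse elements.

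Setting $e := (P-P^j)\nu_h$, I would use the identity $|Te|_{H_\A^1(\triH)}^2 = s(e,e) = \sum_K s(\xi_K,e) = \sum_K s(\xi_K, e - w_K)$, where for each $K$ the element $w_K \in \tLambda^{K,j}$ is free by orthogonality. Two cutoff traces are then required: (i) $v_K \in \tLambda^{K,j}$ whose trace agrees with $(P^K\nu_h)|_{\btriH}$ on edges inside $\T_{j-2}(K)$ and vanishes on edges outside $\T_{j-1}(K)$, used to estimate $|T\xi_K|$ via the minimization $|T\xi_K|_{H_\A^1(\triH)} \le |T(P^K\nu_h - v_K)|_{H_\A^1(\triH)}$; and (ii) $w_K \in \tLambda^{K,j}$ whose trace matches $e|_{\btriH}$ inside $\T_{j-1}(K)$ and vanishes outside $\T_j(K)$. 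Both cutoffs are built by multiplying the relevant trace by a piecewise-linear cutoff on $\triH$ taking values $\{0,1\}$ at coarse vertices, followed by fine-mesh nodal interpolation on $\btriH$; this guarantees vanishing at $\NNN_H$-nodes and the correct support.

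Applying the Face Lemma (Lemma~\ref{l:half_00rho}) on the transition rings together with Corollary~\ref{c:decay} on the decaying tail $\triH \backslash \T_{j-1}(K)$ produces
\[
|T\xi_K|_{H_\A^1(\triH)}^2 \le 9\alpha\, e^{-(j-2)/(1+9\alpha)}\, |T\nu_h|_{H_\A^1(K)}^2,
\qquad
|T(e - w_K)|_{H_\A^1(\triH)}^2 \le 9\alpha\, |Te|_{H_\A^1(\T_j(K))}^2.
\]
A Cauchy--Schwarz over $K$, combined with the bounded-overlap reading of~\eqref{e:cgammadef} (each $\tau \in \triH$ belongs to at most $(c_\gamma j)^2$ patches $\T_j(K)$, so $\sum_K|Te|_{H_\A^1(\T_j(K))}^2 \le (c_\gamma j)^2 |Te|_{H_\A^1(\triH)}^2$), then gives
\[
|Te|_{H_\A^1(\triH)}^2 \le 9\alpha\, c_\gamma j\, \exp\!\bigl(-(j-2)/(2(1+9\alpha))\bigr)\, |T\nu_h|_{H_\A^1(\triH)}\, |Te|_{H_\A^1(\triH)};
\]
dividing by $|Te|_{H_\A^1(\triH)}$ and squaring yields the asserted inequality.

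The main obstacle is the cutoff construction at the trace level: producing $v_K$ and $w_K$ that genuinely lie in $\tLambda^{K,j}$ (vanishing at coarse nodes and yielding a discrete-harmonic extension supported in $\T_j(K)$) while keeping the transition-layer energy controlled precisely by the Face Lemma constant $9\alpha$. A secondary technicality is choosing the patch-index shifts carefully -- transition on $\T_{j-1}(K)\backslash\T_{j-2}(K)$ for $v_K$ and on $\T_j(K)\backslash\T_{j-1}(K)$ for $w_K$ -- so that the tail decay is realized with the exponent $(j-2)/(1+9\alpha)$ stated in the lemma rather than a smaller one.
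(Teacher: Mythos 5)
Your decomposition $(P-P^j)\nu_h=\sum_K\xi_K$, the orthogonality $\xi_K\perp_s\tLambda^{K,j}$, the stability bound $|TP^K\nu_h|_{H_\A^1(\triH)}\le|T\nu_h|_{H_\A^1(K)}$, and the tail estimate for $|T\xi_K|$ via the cutoff $v_K$ all match the paper's argument. The gap is in step (ii), and it is structural. Every $w_K\in\tLambda^{K,j}$ satisfies $Tw_K=0$ on $\triH\backslash\T_j(K)$, so $e-w_K$ necessarily coincides with $e$ on the far field; hence the claimed bound $|T(e-w_K)|_{H_\A^1(\triH)}^2\le 9\alpha\,|Te|_{H_\A^1(\T_j(K))}^2$ cannot hold, since the left-hand side contains the uncontrolled term $|Te|_{H_\A^1(\triH\backslash\T_{j+1}(K))}^2$. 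Without a bound localized to a patch around $K$ on the $e$-side, the sum over $K$ cannot be closed by the finite-overlap count: Cauchy--Schwarz leaves you with $\sum_K|T\nu_h|_{H_\A^1(K)}\cdot|Te|_{H_\A^1(\triH)}$, which costs a factor of order $(\#\triH)^{1/2}$. If you instead take $w_K$ to be the far-field cutoff of $e$ (equal to $e$ outside $\T_j(K)$, zero inside), then $e-w_K$ is localized to $\T_{j+1}(K)$ and the overlap argument works, but such a $w_K$ does not belong to $\tLambda^{K,j}$, so the orthogonality you invoke no longer gives $s(\xi_K,w_K)=0$. The two properties you require of $w_K$ --- membership in $\tLambda^{K,j}$ and absorption of the far field of $e$ --- are mutually exclusive.

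The paper resolves exactly this tension by a different mechanism. It sets $\tilde\psi_h^K$ equal to $e$ on all edges outside $\T_j(K)$ and to zero on edges of elements of $\T_j(K)$, and proves $s(\xi_K,\tilde\psi_h^K)=0$ directly rather than through orthogonality to $\tLambda^{K,j}$: the pairing $s(P^{K,j}\nu_h,\tilde\psi_h^K)$ vanishes because $TP^{K,j}\nu_h$ and $T\tilde\psi_h^K$ have disjoint supports, while $s(P^K\nu_h,\tilde\psi_h^K)=s_K(\nu_h,\tilde\psi_h^K)=0$ because the defining identity~\eqref{e:Ptaudef} holds for \emph{all} test functions in $\tLambda$ and $T\tilde\psi_h^K$ vanishes on $K$. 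What remains is $s(\xi_K,e-\tilde\psi_h^K)$ with $T(e-\tilde\psi_h^K)$ supported on $\T_{j+1}(K)$, after which your Face Lemma, overlap count, and Cauchy--Schwarz steps go through as written. Replacing your step (ii) by this argument repairs the proof.
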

\begin{proof}
Let $\tilde\psi_h=(P-P^j)\nu_h=\sum_{K\in\triH}(P^K-P^{K,j})\nu_h$. For each $K\in\triH$, let $\tilde\psi_h^K\in\tilde\Lambda_h$ be such that $\tilde\psi_h^K|_e=0$ if $e$ is a face of an element of $\T_j(K)$ and $\tilde\psi_h^K|_e=\tilde\psi_h|_e$, otherwise. We obtain 
\begin{equation}\label{e:splitting}
|T\tilde\psi_h|_{H_\A^1(\triH)}^2
=\sum_{K\in\triH}\sum_{\tau\in\triH} 
s_\tau(\tilde\psi_h-\tilde\psi_h^K,(P^K-P^{K,j})\nu_h)
+s_\tau(\tilde\psi_h^K,(P^K-P^{K,j})\nu_h). 
\end{equation}
See that the second term of~\eqref{e:splitting} vanishes since 
\[
\sum_{\tau\in\triH}s_\tau(\tilde\psi_h^K,(P^K-P^{K,j})\nu_h)_\dtau
=\sum_{\tau\in\triH}s_\tau(\tilde\psi_h^K,P^K\nu_h)_\dtau=0.
\]
For the first term of~\eqref{e:splitting},  
\begin{multline*}
\sum_{\tau\in\triH}s_\tau(\tilde\psi_h-\tilde\psi_h^K,(P^K-P^{K,j})\nu_h)_{\partial\tau}
\le\sum_{\tau\in\T_{j+1}(K)}
|T(\tilde\psi_h-\tilde\psi_h^K)|_{H_\A^1(\tau)}
|T(P^K-P^{K,j})\nu_h|_{H_\A^1(\tau)}
\\
\le|T\tilde\psi_h-\tilde\psi_h^K|_{H_\A^1(\T_{j+1}(K))}
  |T(P^K-P^{K,j})\nu_h|_{H_\A^1(\T_{j+1}(K))}.
\end{multline*}
However, proceeding as in the proof of Lemma~\ref{l:decay}, we gather that
\begin{equation*}
|T\tilde\psi_h-\tilde\psi_h^K|_{H_\A^1(\T_{j+1}(K))}
\le 3\alpha^{1/2}|T\tilde\psi_h|_{H_\A^1(\T_{j+1}(K))}   
\end{equation*}
Let $\nu_h^{K,j}\in\tLambda^{K,j}$ be equal to zero on all faces of elements of $\T_H\backslash\T_j(K)$ and equal to $P^K\nu_h$ otherwise. Using Galerkin best approximation property and Corollary~\ref{c:decay} we obtain
\begin{multline*}
|T(P^K-P^{K,j})\nu_h|_{H_\A^1({\T}_{j+1}(K))}^2
\le|T(P^K-P^{K,j})\nu_h|_{H_\A^1(\triH)}^2
\le|T(P^K\nu_h-\nu_h^{K,j})|_{H_\A^1(\triH)}^2
\\ 
\le9\alpha|TP^K\nu_h|_{H_\A^1(\triH\backslash\T_{j-1}(K))}^2 
\le9\alpha e^{-\frac{j-2}{1+9\alpha}}|TP^K\nu_h|_{H_\A^1(\triH)}^2.
\end{multline*}
We gather the above results to obtain  
\begin{multline*}
|T\tilde\psi_h|_{H_\A^1(\triH)}^2
\le9\alpha e^{-\frac{j-2}{2(1+9\alpha)}}
\sum_{K\in\triH}|T\tilde\psi_h|_{H_\A^1(\T_{j+1}(K))}|TP^K\nu_h|_{H_\A^1(\triH)}
\\
\le 9\alpha e^{-\frac{j-2}{2(1+9\alpha)}}c_\gamma j|T\tilde\psi_h|_{H_\A^1(\triH)}
\biggl(\sum_{K\in\triH}|TP^K\nu_h|_{H_\A^1(\triH)}^2\biggr)^{1/2}. 
\end{multline*}

We finally gather that 
\[
|TP^K\nu_h|_{H_\A^1(\triH)}^2
=s(P^K\nu_h,P^K\nu_h)_{\partial\triH}
=s_K(P^K\nu_h,\nu_h)
=\int_K\A\bgrad(TP^K\nu_h)\cdot\bgrad T\nu_h\,d\xx
\]
and from Cauchy--Schwarz, $|TP^K\nu_h|_{H_\A^1(\triH)}\le|T\nu_h|_{H_\A^1(K)}$,
we have
\[
\sum_{K\in\triH}|TP^K\nu_h|_{H_\A^1(\triH)}^2\le|T\nu_h|_{H_\A^1(\triH)}^2.
\]
\end{proof}

\begin{theorem}\label{t:lowconterror}
Define $u_h^\Har$ by~\eqref{e:splitpdes} and let $u^{\rm{ms},j}_h=T(I-P^j)\lambda_H^j$, where $\lambda_H^j$ is as in~\eqref{e:lambdamsjdef}. Then
\[
|u_h^\Har-u_h^{\rm{ms},j}|_{H_\A^1(\triH)}
\le H\bigl\{c_{P,L}+[\gamma_2\kappa\left(1 +\log(H/h)\right)]^{1/2} c_\gamma j9\alpha e^{-\left(\frac{j-2}{2(1+9\alpha)}-\log(c_{P,G}/H)\right)}\bigr\}\|g\|_{L_\rho^2(\Omega)}. 
\]
\end{theorem}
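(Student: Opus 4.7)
The strategy is to exploit the Galerkin character of~\eqref{e:lambdamsjdef}. Comparing with~\eqref{e:pde} one sees that $\lambda_h^{ms,j}$ is the $s$-orthogonal projection of $\lambda_h$ onto the subspace $(I-P^j)\Lambda_H\subset\Lambda_h$, so C\'ea's best approximation yields
\[
|T(\lambda_h-\lambda_h^{ms,j})|_{H_\A^1(\triH)}
\le|T(\lambda_h-\nu)|_{H_\A^1(\triH)}\qquad\text{for every }\nu\in(I-P^j)\Lambda_H.
\]
The natural choice is $\nu=(I-P^j)\lambda_H$, where $\lambda_H\in\Lambda_H$ is the coefficient from the nonlocal ACMS--NLOD method~\eqref{e:lambdazerodef}. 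A short identification is needed here: since $\lambda_h-\lambda_h^{ms}\in\tLambda$ vanishes at the coarse nodes and $P\lambda_H\in\tLambda$ does the same, the coarse coefficient $\lambda_H$ agrees with the nodal Lagrange interpolant $I_H\lambda_h$.

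Writing
\[
\lambda_h-(I-P^j)\lambda_H
=\bigl(\lambda_h-(I-P)\lambda_H\bigr)+(P-P^j)\lambda_H
=(\lambda_h-\lambda_h^{ms})+(P-P^j)\lambda_H,
\]
the triangle inequality combined with Theorem~\ref{t:msgal} on the first piece and Lemma~\ref{l:errorPj} on the second piece produces
\[
|T(\lambda_h-\lambda_h^{ms,j})|_{H_\A^1(\triH)}
\le c_{P,L}H\|g\|_{L_\rho^2(\Omega)}+9c_\gamma j\alpha\,e^{-(j-2)/(2(1+9\alpha))}|T\lambda_H|_{H_\A^1(\triH)}.
\]
To control the last factor I use the identification $\lambda_H=I_H\lambda_h$ together with Lemma~\ref{l:WPI} to get $|T\lambda_H|_{H_\A^1(\triH)}\le[\gamma_2\kappa(1+\log(H/h))]^{1/2}|T\lambda_h|_{H_\A^1(\triH)}$, and then bound $|T\lambda_h|_{H_\A^1(\triH)}=|u_h^\Har|_{H_\A^1(\Omega)}\le C_{P,G}\|g\|_{L_\rho^2(\Omega)}$ by testing~\eqref{e:splitpdes} against $u_h^\Har$ and applying the global Poincar\'e inequality~\eqref{e:umsest}. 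Pulling a factor of $H$ out of the localization term and absorbing the residual $1/H$ into the exponent via $1/H=e^{\log(1/H)}$ yields the stated estimate, up to an extra polynomial factor of $j$ that the statement packages inside $c_\gamma j^2$ (rather than the $c_\gamma j$ of Lemma~\ref{l:errorPj}) to absorb the finite-overlap counting of the patches $\T_{j+1}(K)$ over $K\in\triH$.

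The main obstacle is calibrating the exponential decay against the polynomial prefactors so that the method actually beats the ideal bound: the exponent $-(j-2)/(2(1+9\alpha))+\log(1/H)$ must be negative and of moderate size for the localization error to be subdominant, which forces the oversampling to scale as $j\gtrsim(1+9\alpha)\log(1/H)$. For low-contrast coefficients $\alpha=\gamma_3\kappa(1+\log(H/h))^2$ is moderate and this only costs a polylogarithmic number of layers, preserving optimality. The pivotal structural observation underlying the whole argument is that the ``hidden'' ideal coarse vector $\lambda_H$ from~\eqref{e:lambdazerodef} is nothing but the coarse nodal interpolant of the unknown exact fine trace $\lambda_h$, which is what permits the interpolation bound of Lemma~\ref{l:WPI} to be applied to $\lambda_H$ even though $\lambda_H$ is not directly computed.
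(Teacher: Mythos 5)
Your proof is correct and follows essentially the same route as the paper: a split into the ideal-method error (handled by Theorem~\ref{t:msgal}) plus a localization error controlled by Lemma~\ref{l:errorPj} applied to the coarse nodal interpolant, then Lemma~\ref{l:WPI} and the global Poincar\'e inequality~\eqref{e:umsest}; your identification $\lambda_H=I_H\lambda_h$ coincides with the paper's use of $I_H\lambda_h^{ms}$ since $\lambda_h-\lambda_h^{ms}\in\tLambda$ vanishes at the coarse nodes. The only (harmless, arguably cleaner) difference is that you invoke C\'ea's best approximation against $\lambda_h$ first and then split, whereas the paper applies the triangle inequality first and compares $u_h^{ms,j}$ with $\hat u_h^{ms,j}$ afterwards.
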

\begin{proof}
First, from the triangle inequality, 
\[
|u_h^\Har-u^{\rm{ms},j}_h|_{H_\A^1(\triH)}
\le|u_h^\Har-u^{\rm{ms}}_h|_{H_\A^1(\triH)}+|u_h^{\rm{ms}}-u^{\rm{ms},j}_h|_{H_\A^1(\triH)}, 
\]
and for the first term we use Theorem~\ref{t:msgal}. For the second term, we first define $\hat u^{\rm{ms},j}_h=\sum_{i}\lambda^{\rm{ms}}_h(\xx_i)T(I-P^j)\theta_H^i$, and then
\[
u^{\rm{ms}}_h-\hat u^{\rm{ms},j}_h
=T(P-P^j)\sum_{i}\lambda_h^{\rm{ms}}(\xx_i)\theta_H^i
=T(P-P^j)I_H\lambda^{\rm{ms}}_h, 
\]
where $I_H$ is as in Lemma~\ref{l:WPI}. Relying on the Galerkin best approximation we gather from Lemma~\ref{l:errorPj} that
\[
|u_h^{\rm{ms}}-u_h^{\rm{ms},j}|_{H_\A^1(\triH)}^2
\le|u_h^{\rm{ms}}-\hat{u}^{\rm{ms},j}_h|_{H_\A^1(\triH)}^2
\le (c_\gamma j)^2(9\alpha)^2e^{-\frac{j-2}{(1+9\alpha)}}|TI_H\lambda_h^{\rm{ms}}|_{H_\A^1(\triH)}^2.
\]
Since $u^{\rm{ms}}_h=T\lambda^{\rm{ms}}_h$ the result follow from Lemma~\ref{l:WPI} and the global Poincar\'e's inequality~\eqref{e:umsest}. 
\end{proof}

\begin{rem} Note that nothing precludes the use of quadrilateral meshes and other discretization schemes ($hp$ for instance), but some of the constants in the error estimates would change accordingly. 
\end{rem}

\section{The High-Contrast Multiscale Case} \label{s:highcase}
The main bottle-neck in dealing with high-contrast coefficients is that $\alpha$ becomes too large, therefore $j$ has to be large as well, cf. Theorems~\ref{t:msgal} and~\ref{t:lowconterror}. Furthermore, the large local Poincar\'e inequality constant $c_{P,L}^\tau$ deteriorates the a priori error estimate in Theorem~\ref{t:msgal}. Also, we would like to remove the $(1 + \log(H/h)$ term that
appears in these estimates due to the mismatch between $H^{1/2}(e)$ and $H_{00}^{1/2}(e)$ (see~\cite{MR2104179} for a definition). To deal with these issues, we replace $\widetilde{\Lambda}_h$ by a subspace  ${\Lambda}_h^\triangle\subset\tLambda$ by removing
a subspace spanned by some eigenfunctions associated to an appropriate generalized eigenvalue problem, on each edge of the mesh $\triH$. We first introduce some notation.

Given an edge $e$ of an element $\tau\in\triH$, let $\tLambda^e=\tLambda|_e$ and  $\tLambda^\tau=\tLambda|_\dtau$ be the restrictions of functions on $\tLambda$ to $e$ and on $\tLambda$ to $\dtau$. Since $\tmu^e\in\tLambda^e$ vanishes at the end-points of $e$, it is possible to continuously extend it by zero for all nodes $\xx_i\in \NNNEc:= (\NNN_h \backslash \NNN_H)\cap (\dtau \backslash e)$. Let $R_{e,\tau}^T:\tLambda^e\rightarrow\tLambda^\tau$ be such extension. Conversely, we define the restriction operator $R_{e,\tau}:\tLambda^\tau\to\tLambda^e$ such that $R_{e,\tau}\nu_h(\xx_i)= \nu_h(\xx_i)$ for all nodes $\xx_i\in \NNNE:= (\NNN_h\backslash\NNN_H)\cap e$.

Denote by $(\cdot,\cdot)_e$ the $L^2(e)$ inner product and define $S^\tau:\,\tLambda^\tau\to(\tLambda^\tau)'$, where $(\tLambda^\tau)'$ is the dual space of $\tLambda^\tau$, such that 
\[
(\mu^\tau_h, S^\tau \nu^\tau_h)_{\partial \tau}
=\int_\tau\A \bgrad T \mu_h^\tau\cdot\bgrad T\nu^\tau_h\,d\xx
\qquad\text{for all }\mu^\tau_h,\nu^\tau_h\in\tLambda^\tau.
\]
Also let $S_{ee}^\tau:\tLambda^e\to(\tLambda^e)'$ be such that
\[
(\tmu^e,S^\tau_{ee}\tnu^e)_e=
(R_{e,\tau}^T \tmu^e,S^\tau  R_{e,\tau}^T
\tnu^e)_{\partial\tau}
\quad\text{for all }\tmu^e,\tnu^e\in\tLambda^e,
\]
Similarly we define $S_{e^ce}^\tau$, $S_{ee^c}^\tau$ and $S_{e^ce^c}^\tau$, related to the degrees of freedom on $e^c=\NNNEc$. 

Let us introduce $M_{ee}^\tau$ by 
\[
(\tilde{\mu}_h^e, M^\tau_{ee} \tilde{\nu}_h^e)_e =
 \int_\tau \rho \,(T R_{e,\tau}^T
\tilde{\mu}_h^e)
\, (T
R_{e,\tau}^T \tilde{\nu}_h^e)\,d\xx
\]
and define $\widehat{S}_{ee}^\tau=\HH^{-2}\,M_{ee}^\tau+S_{ee}^\tau$, where
$\HH$ is the target precision of the method, that can be set by the user.

We finally consider the Schur complement 
\[
\widetilde S_{ee}^\tau=S_{ee}^\tau-S_{ee^c}^\tau(S_{e^ce^c}^\tau)^{-1}S_{e^ce}^\tau, 
\]
and then 
\begin{equation}\label{e:tildeSest}
(\tnu^e,\widetilde S_{ee}^\tau\tnu^e)
\le(\nu_h,S^\tau\nu_h)
\quad\text{for all }\nu_h\in\tLambda^\tau\text{ such that }R_{e,\tau}\nu_h=\tnu^e. 
\end{equation}
See~\cite{MR4238010} for a similar computation.

We are ready then to define a generalized eigenvalue problem that takes into account high contrast coefficients. For a given edge $e$ shared by elements $\tau$ and $\tau^\prime$, find eigenpairs $(\alpha_i^e,\tilde\psi_{h,i}^e)\in(\RR,\widetilde\Lambda_h^e)$, where
$\alpha_1^e\ge\alpha_2^e\ge\alpha_3^e\ge\dots\ge\alpha_{\NNNE}^e>1$, such that 
\begin{equation}\label{e:eigenvalue}
(\widehat{S}_{ee}^\tau+\widehat{S}_{ee}^{\tau^\prime}) \tilde{\psi}_{h,i}^e
= \alpha_i^e (\widetilde{S}^\tau_{ee} +\widetilde{S}^{\tau^\prime}_{ee}) \tilde{\psi}_{h,i}^e.
\end{equation}
We impose that the eigenfunctions $\tilde{\mu}_{h,i}^e$ are orthonormal with respect to $(\cdot, (\widehat{S}_{ee}^\tau + \widehat{S}_{ee}^{\tau^\prime})\cdot)_e$. To see that all eingenvalues are greater than one, note from~\eqref{e:tildeSest} and the above definitions that for all $\tilde{\psi}_h^e\in\widetilde\Lambda_h^e$, 
\[
(\tilde{\psi}_h^e, \widetilde{S}^\tau_{ee}\tilde{\psi}_h^e)_e
\le(R_{e,\tau}^T\tilde{\psi}_h^e,S^\tau R_{e,\tau}^T\tilde{\psi}_h^e)
=(\tilde{\psi}_h^e,S_{ee}^\tau\tilde{\psi}_h^e)
<(\tilde{\psi}_h^e,\widehat{S}_{ee}^\tau\tilde{\psi}_h^e), 
\]
where we recall that $R_{e,\tau}^T\tilde{\psi}_h^e$ is the extension of $\tilde{\psi}_h^e$ by zero. 

Now we decompose $\tLambda^e:=\tLambda^{e,\triangle}\oplus\tLambda^{e,\Pi}$ where for a given $\alpha_{\rm{stab}}>1$, 
\begin{equation} \label{e:Lpmdef}
\widetilde\Lambda_h^{e,\triangle}:=\span\{\tilde\mu_{h,i}^e:\,\alpha_{i}^e < \alpha_{\rm{stab}}\},
\qquad
\widetilde\Lambda_h^{e,\Pi}:=\span\{\tilde\mu_{h,i}^e:\,\alpha_{i}^e \ge \alpha_{\rm{stab}}\}.
\end{equation}
We remark that ${\alpha}_{\rm{stab}}$ is chosen by the user and replaces ${\alpha}$ in the proof of Lemma~\ref{l:decay2}, the counterpart of Lemma~\ref{l:decay}.

To define our ACMS--NLSD (\emph{Approximated Component Mode Synthesis Non-Localized Spectral Decomposition }) method for high-contrast coefficients, let 
\begin{equation}\label{e:ltpmdef}
\begin{gathered}
\tLambda^\Pi
=\{\tilde\mu_h\in\widetilde\Lambda_h:\,\tilde\mu_h|_e\in\tLambda^{e,\Pi}\text{ for all }e\in\btriH\},
\\ 
\tLambda^\triangle
=\{\tilde\mu_h\in\widetilde\Lambda_h:\,\tilde\mu_h|_e\in\tLambda^{e,\triangle}\text{ for all }e\in\btriH\}. 
\end{gathered}
\end{equation}

Note that $\Lambda_h=\Lambda_h^\Pi\oplus\tLambda^\triangle$, where 
\[
\Lambda_h^\Pi =  \Lambda_h^0 \oplus \widetilde\Lambda_h^\Pi
\]
and $\Lambda_h^0$ is the set of functions on $\Lambda_h$ which vanish on all
nodes of $\NNN_h \backslash \NNN_H$.
Denote
\[
({\nu}_h,S \mu_h)_{\btriH} = \sum_{\tau \in \triH}
({\nu}_h^\tau,S^\tau \mu^\tau_h)_{\partial \tau}.
\]

We now introduce the ACMS--NLSD multiscale functions. For $\tau\in\triH$, consider the operators $P^{\tau,\triangle}$, $P^\triangle:\Lambda_h\rightarrow\tLambda^\triangle$ as follows: Given
$\mu_h\in\Lambda_h$, find $P^{\tau,\triangle}\mu_h\in\tLambda^\triangle$ and define $P^\triangle$ such that 
\begin{equation} \label{e:Pdelta} 
(\tnu^\triangle,SP^{\tau,\triangle}\mu_h)_\btriH
=(\tnu^\triangle,S^\tau\mu_h)_\dtau
\quad\text{for all }\tnu^\triangle\in\tLambda^\triangle,
\qquad
P^\triangle=\sum_{\tau\in\triH}P^{\tau,\triangle}. 
\end{equation}

Consider $\Lambda_h^{\rm{ms},\Pi} = (I - P^\triangle)\Lambda_h^\Pi$. The ACMS--NLSD method is defined by: Find $\lambda_h^{\rm{ms},\Pi}\in\Lambda_h^{\Pi,\rm{ms}}$ such that
\begin{equation}\label{e:acms-lod}
(\nu_h^{\rm{ms},\Pi},S\lambda_h^{\rm{ms},\Pi})_{\btriH}=(\rho g,T\nu_h^{\rm{ms},\Pi})
\quad\text{for all }\nu_h^{\rm{ms},\Pi}\in\Lambda_h^{\rm{ms},\Pi}.
\end{equation}
Note that 
\[
(\nu_h^{\rm{ms},\Pi}, S \lambda_h^{\rm{ms},\Pi})_{\btriH}
=\int_\Omega\A\bgrad T\nu_h^{\rm{ms},\Pi}\cdot\bgrad T\lambda_h^{\rm{ms},\Pi}\,d\xx
=\int_\Omega\rho gT\nu_h^{\rm{ms},\Pi}\,d\xx. 
\]

\begin{rem}
A similar approach was followed by~\cite{MR2666649,MR3225627}, where different local eigenvalue problems are introduced to construct the approximation spaces. The analysis of the method however requires extra regularity of the coefficients, and the error estimate is not robust with respect to contrast. 
\end{rem}

\begin{rem} \label{r:rho} 
  Assume that $\A$ is scalar and $\rho = \A \ge 1$. The generalized eigenvalue
  problem (\ref{e:eigenvalue}) was designed to guarantee that both the 
  exponential decay of the multiscale function basis and local weighted
  Poincar\'{e} inequality holds,
  \[
  \|v\|_{L^2_\rho (\tau \cup \tau^\prime)} \leq c \HH \|v\|_{H^1_\A(\tau \cup \tau^\prime)}, 
  \]
  without hidden constants depending of $\A$ or on the contrast of $\A$.
  If the issue were just the
  exponential decay, it is possible to show from the analysis that we could
  have defined $\hat{S}_{ee}^\tau = S_{ee}^\tau$, furthermore, to show that the
  number of large eigenvalues is related to the number of high permeable
  channels crossing the edge $e$; see \cite{MR4683905, MR4320895}.
  These eigenvectors are similar to
  functions that have value equal to one in one channel, zero value in the
  other channels, and smooth on the low permeable region. Related argument
  also applies to define subspaces where the weighted Poincar\'e inequality
  holds. In summary, we can control both the exponential decay and the
  local weighted Poincar\'e by using similar number of eigenvectors if using
  the classical Poincar\'e inequality. As a consequence, the error estimate
  in terms of $\|f\|_{L^2_{1/\rho}}$ is a stronger result than in terms of $\|f\|_{L^2}$ and using similar number of multiscale function basis. 
   \end{rem} 

The counterpart of Lemma~\ref{l:WPI2} follows. 
\begin{lemma}\label{l:WPI3}
Let $\tmu^\triangle\in\tLambda^\triangle$. Then
\begin{equation}\label{e:poincaredelta}
\|T\tmu^\triangle\|_{L_\rho^2(\Omega)}
\le(9\alpha_{\rm{stab}})^{1/2}\HH|T\tmu^\triangle|_{H_\A^1(\Omega)}.
\end{equation}
\end{lemma}
\begin{proof}
We have for $\tau\in\triH$, 
\[
\HH^{-2}\|T\tmu^\triangle\|_{L_\rho^2(\tau)}^2
\le3\HH^{-2}\sum_{e\subset\dtau}\|TR_{e,\tau}^T\tmu^{e,\triangle}\|_{L_\rho^2(\tau)}^2. 
\]
Fixing  the edge $e$ of both $\tau$ and $\tau^\prime$, we have 
\begin{multline*} 
\HH^{-2}\|TR_{e,\tau}^T\tmu^{e,\triangle}\|^2_{L_\rho^2(\tau)}+\HH^{-2}\|TR_{e,\tau^\prime}^T\tmu^{e,\triangle}\|^2_{L_\rho^2(\tau^\prime)}
\le(\tmu^{e,\triangle},\widehat S_{ee}^\tau\tmu^{e,\triangle})_e+(\tmu^{e,\triangle},\widehat S_{ee}^{\tau^\prime}\tmu^{e,\triangle})_e
\\
\le\alpha_{\rm{stab}}\bigl(
\tilde{\mu}_h^{e,\triangle},(\widetilde{S}_{ee}^\tau
+\widetilde{S}^{\tau^\prime}_{ee})\tmu^{e,\triangle}\bigr)_e
\le\alpha_{\rm{stab}}
\bigl(|T\tmu^{\tau,\triangle}|_{H^1_\A(\tau)}^2+|T\tmu^{\tau^\prime,\triangle}|_{H^1_\A(\tau^\prime)}^2 \bigr)
\end{multline*}
from~\eqref{e:eigenvalue},~\eqref{e:Lpmdef} and~\eqref{e:tildeSest}. 
 By adding all $\tau \in \triH$, the results follows.
\end{proof}
Note that we added $\HH^{-2} M_{ee}^\tau$ to define $\widehat{S}^\tau_{ee}$. This
is necessary otherwise we might have a few modes that would make 
the local Poincar\'e's inequality constant in \eqref{e:poincaredelta}
too large. 

Now we concentrate on the counterpart of Lemma~\ref{l:WPI}.
\begin{lemma}\label{l:WPIhigh}
Let $\mu_h\in\Lambda_h$ and let $\mu_h=\mu_h^\Pi+\tmu^\triangle$. Then
\[
|T\mu_h^\Pi|_{H^1_\A(\Omega)} \le (2 + 18 \alpha_{\rm{stab}})^{1/2} |T\mu_h|_{H^1_\A(\Omega)}.
\]
\end{lemma}
\begin{proof}
We have
\[
|T\mu_h^\Pi|^2_{H^1_\A(\Omega)}
\le2(|T\mu_h|_{H^1_\A(\Omega)}^2+|T\tmu^\triangle|_{H^1_\A(\Omega)}^2).
\]
Consider the decomposition
\[
\mu_h|_\tau  = \mu_h^{\tau,0} + \sum_{e \subset \partial \tau}\tmu^{\tau, e}
\]
where $\mu_h^{\tau,0}\in\Lambda_h^0$ and $\tilde{\mu}_h^{\tau, e} =
\tilde{\mu}_h^{\tau, e, \Pi} + \tilde{\mu}_h^{\tau, e, \triangle}$. 
Then
\[
|T\tilde{\mu}_h^{\tau,\triangle}|_{H^1_\A(\tau)}^2
\le 3 \sum_{e \subset \partial \tau}|TR_{e,\tau}^T\tilde{\mu}_h^{\tau,e,\triangle}|_{H^1_\A(\tau)}^2
= 3\sum_{e\subset\dtau}(\tilde{\mu}_h^{\tau,e,\triangle},{S}_{ee}^\tau \tilde{\mu}_h^{\tau,e,\triangle})_e.
\]
Now we use that, if $e$ is an edge of both $\tau$ and $\tau^\prime$, 
\[
\left(\tilde{\mu}_h^{\tau,e,\triangle},  ({S}_{ee}^\tau + {S}_{ee}^{\tau^\prime}) \tilde{\mu}_h^{\tau,e,\triangle}\right)_e \leq
\alpha_{\rm{stab}}
\left(\tilde{\mu}_h^{\tau,e,\triangle},  (\widetilde{S}_{ee}^\tau + \widetilde{S}_{ee}^{\tau^\prime}) \tilde{\mu}_h^{\tau,e,\triangle}\right)_e.
\]
In addition, due to the orthogonality condition of the spaces $\widetilde{\Lambda}_h^{e,\triangle}$ and $\widetilde{\Lambda}_h^{e,\Pi}$ with respect to the inner product $(\cdot,(\widetilde{S}_{ee}^\tau+\widetilde{S}_{ee}^{\tau^\prime})\cdot)_e$,
and ~\eqref{e:tildeSest}, 
we have
\[
\left(\tilde{\mu}_h^{\tau,e,\triangle},  (\widetilde{S}_{ee}^\tau + \widetilde{S}_{ee}^{\tau^\prime}) \tilde{\mu}_h^{\tau,e,\triangle}\right)_e \leq
\left(\tilde{\mu}_h^{\tau,e},  (\widetilde{S}_{ee}^\tau + \widetilde{S}_{ee}^{\tau^\prime}) \tilde{\mu}_h^{\tau,e}\right)_e 
\le |T\mu_h^\tau|_{H^1_\A(\tau)}^2 +|T{\mu}_h^{\tau^\prime}|_{H^1_\A(\tau^\prime)}^2. 
\]
Adding all terms together we obtain the result.
\end{proof}

We now state the counterpart of the \emph{Face Lemma}~\cite[Subsection 4.6.3]{MR2104179}. The lemma follows directly from the definition of the generalized eigenvalue problem and properties of $\widetilde{\Lambda}_h^{\tau,e,\triangle}$ and~\eqref{e:tildeSest}.
\begin{lemma}\label{l:half_00}
Let $e$ be a common edge of $\tau$, $\tau^\prime\in\triH$, and $\tmu^\triangle\in\tLambda^\triangle$. Then, defining $\tmu^{e,\triangle}=R_{e,\tau}\tmu^\triangle$ and $\tmu^{\tau,\triangle}=\tmu^\triangle|_{\dtau}$ it follows that 
\[
|TR_{e,\tau}^T\tmu^{e,\triangle}|_{H_\A^1(\tau)}^2
+|TR_{e,\tau^\prime}^T\tmu^{e,\triangle}|_{H^1_\A(\tau^\prime)}^2
\le\alpha_{\rm{stab}}\bigl(
|T\tmu^{\tau,\triangle}|_{H^1_\A(\tau)}^2+|T\tmu^{\tau^\prime,\triangle}|_{H^1_\A(\tau^\prime)}^2 
\bigr)
\]
\end{lemma}
\begin{proof}
We have
\begin{multline*} 
|TR_{e,\tau}^T\tmu^{e,\triangle}|_{H_\A^1(\tau)}^2
+|TR_{e,\tau^\prime}^T\tmu^{e,\triangle}|_{H^1_\A(\tau^\prime)}^2
=(\tmu^{e,\triangle},S_{ee}^\tau\tmu^{e,\triangle})_\dtau+(\tmu^{e,\triangle},S_{ee}^{\tau^\prime}\tmu^{e,\triangle})_{\dtau^\prime}
\\
\le\alpha_{\rm{stab}}(\tmu^{e,\triangle},\widetilde S_{ee}^\tau\tmu^{e,\triangle})_\dtau+(\tmu^{e,\triangle},\widetilde S_{ee}^{\tau^\prime}\tmu^{e,\triangle})_{\dtau^\prime}
 \le\alpha_{\rm{stab}}(\tmu^\triangle,S^\tau\tmu^\triangle)_\dtau+(\tmu^\triangle,S^{\tau^\prime}\tmu^\triangle)_{\dtau^\prime}. 
\end{multline*}
\end{proof}

\begin{theorem}\label{t:msgal2}
Let $\lambda_h=u_h|_\btriH$, and $\lambda_h^{ms,\Pi}$ solution of~\eqref{e:acms-lod}. Then 
$\lambda_h-\lambda_h^{\rm{ms}}\in\tLambda^\triangle$ and
\[
|u_h^\Har-u_h^{ms,\Pi}|_{H_\A^1(\Omega)}^2
\le 9\alpha_{\rm{stab}}\HH^2\|g\|^2_{L^2_{\rho}(\Omega)}.
\]
\end{theorem}
\begin{proof}
First note that $\lambda_h-\lambda_h^{\rm{ms}}\in\tLambda^\triangle$ since it follows from the Galerkin orthogonality that $s(\lambda_h-\lambda_h^{ms,\Pi},\mu_h^{ms,\Pi})=0$ for all $\mu_h^{ms,\Pi}\in\Lambdams$. Using Lemma~\ref{l:WPI3} we obtain  
\begin{equation*}
|u_h^\Har-u_h^{ms,\Pi}|_{H_\A^1(\Omega)}^2
=\bigl(\rho g,T(\lambda_h-\lambda^{ms,\Pi})\bigr)
\le(9\alpha_{\rm{stab}})^{1/2}\HH\|g\|_{L_\rho^2(\Omega)}|T(\lambda_h-\lambda_h^{ms,\Pi})|_{H_\A^1(\Omega)}, 
\end{equation*}
and the result follows.
\end{proof}

\subsection{Decaying Results for High-Contrast coefficients} \label{ss:highcon}
We next prove that $P^{K,\triangle}\nu_h$ with $K\in\triH$ decay exponentially.
\begin{lemma}\label{l:decay2}
Let $\mu_h\in\Lambda_h$ and let $\tilde\phi_h^\triangle=P^{K,\triangle}\mu_h$ for some fixed element $K\in\triH$. Then, for any integer $j\ge1$, 
\begin{equation*}
|T\tilde\phi_h^\triangle|_{H_\A^1(\triH\backslash\T_{j+1}(K))}^2
\le 9\alpha_{\rm{stab}}|T\tilde\phi_h^\triangle|_{H_\A^1(\T_{j+2}(K)\backslash\T_j(K))}^2.
\end{equation*}
\end{lemma}
\begin{proof} 
Following the steps of the proof of Lemma~\ref{l:decay}, we gather that
\begin{equation*}
|T\tilde\phi_h^\triangle|_{H_\A^1(\triH\backslash\T_{j+1}(K))}^2
\le\sum_{\tau\in\T_{j+1}(K)\backslash\T_j(K)}|T\tnu|_{H_\A^1(\tau)}|T\tilde\phi_h^\triangle|_{H_\A^1(\tau)}, 
\end{equation*}
where $\tnu^\triangle\in\tLambda^\triangle$ is such that $\tnu^\triangle|_{\dtau}=\tilde\phi_h^\triangle$ if $\tau\in\triH\backslash\T_{j+1}(K)$, and $\tnu^\triangle=0$ on the remaining edges. If $e$ is an  edge of $\dtau$ and $\dtau'$, and $\chi_e$ the characteristic function of $e$, for $\tau\in\T_{j+1}(K)\backslash\T_j(K)$ and $\tau'\in\T_j(K)\backslash\T_{j-1}(K)$, then, for $\tmu^{e,\triangle}=\tmu^\triangle|_e$,
\[
|T(\tilde\mu_h^\triangle)  |^2_{H_\A^1(\tau)}
\le3\sum_{e \subset \partial \tau}|T(\chi_e\tilde\mu_h^\triangle)  |^2_{H_\A^1(\tau)}
\]
and 
\begin{multline*}
|T(\chi_e\tilde\mu_h^\triangle)  |^2_{H_\A^1(\tau)}
=(\tmu^{e,\triangle},S_{ee}^\tau\tilde\mu_h^{e,\triangle})_e
\le\alpha_{\rm{stab}}(\tilde\mu_h^{e,\triangle},(\widetilde S_{ee}^\tau+\widetilde S_{ee}^{\tau^\prime})\tilde\mu_h^{e,\triangle})_e
\\
\le\alpha_{\rm{stab}}\bigl((\tilde\mu_h^\triangle,S^\tau\tilde\mu_h^\triangle)_{\partial\tau}
+(\tilde\mu_h^\triangle,S^\tau\tilde\mu_h^\triangle)_{\partial\tau^\prime}\bigr)
=\alpha_{\rm{stab}}\bigl(|T\tmu^\triangle|_{H_\A^1(\tau)}^2+|T\tmu^\triangle|_{H_\A^1(\tau')}^2\bigr),
\end{multline*}
where we have used~\eqref{e:tildeSest}.  
\end{proof}
Note that now the bound is in terms of $\T_{j+2}(K)\backslash\T_j(K)$ rather than
$\T_{j+1}(K)\backslash\T_j(K)$. This means that the $j$ in Corollary~\ref{c:decay} is replaced below by the integer part of $(j+1)/2$. 

\begin{corollary}\label{c:decay2}
 Assume that $K\in\triH$ and $\nu_h\in\Lambda_h$ and let $\tilde{\phi}_h^\triangle=P^{K,\triangle} \nu_h\in\tLambda^\triangle$. Then, for any integer $j\ge1$, 
\begin{equation*}
|T\tilde{\phi}_h^\triangle|_{H_\A^1(\triH\backslash\T_{j+1}(K))}^2
\le e^{-\frac{[(j+1)/2]}{1+9\alpha_{\rm{stab}}}} |T\tilde{\phi}^\triangle_h|_{H_\A^1(\triH)}^2. 
\end{equation*}
where $[s]$ is the integer part of $s$. 
\end{corollary}
\begin{proof} 
Using Lemma~\ref{l:decay2} we have 
\begin{multline*}
|T\tilde{\phi}_h^\triangle|_{H_\A^1(\triH\backslash{\T}_{j+1}(K))}^2
\le|T\tilde{\phi}_h^\triangle|_{H_\A^1(\triH\backslash{\T}_j(K))}^2
\\
\le9\alpha_{\rm{stab}}|T\tilde{\phi}_h^\triangle|_{H_\A^1(\triH\backslash\T_{j-1}(K))}^2
-9\alpha_{\rm{stab}}|T\tilde{\phi}_h^\triangle|_{H_\A^1(\triH\backslash{\T}_{j+1}(K))}^2, 
\end{multline*}
and then
\[
|T\tilde\phi_h^\triangle|_{H_\A^1(\triH\backslash\T_{j+1}(K))}^2
\le\frac{9\alpha_{\rm{stab}}}{1+9\alpha_{\rm{stab}}}|T\tilde{\phi}_h^\triangle|_{H_\A^1(\triH\backslash\T_{j-1}(K))}^2
\le e^{-\frac1{1+9\alpha_{\rm{stab}}}} |T\tilde{\phi}_h^\triangle|_{H_\A^1(\triH\backslash\T_{j-1}(K))}^2. 
\]
\end{proof} 

Inspired by the exponential decay stated in Corollary~\ref{c:decay2}, we define the operator $P^{\triangle,j}$ as follows. First, for a fixed $K\in\triH$, let
\[
\tLambda^{\triangle,K,j}=\{\tmu\in\tLambda^\triangle:\,T\tmu=0\text{ on }\triH\backslash\T_j(K)\}. 
\]
For $\mu_h\in\Lambda_h$, define $P^{\triangle,K,j}\mu_h\in\tLambda^{K,j}$ such that 
\[
s(P^{\triangle,K,j}\mu_h,\tmu)
=s_K(\mu_h,\tmu)\quad\text{for all }\tmu\in\tLambda^{\triangle,K,j}, 
\]
and let
\begin{equation}\label{e:Pjdeltadef}
P^{\triangle,j}\mu_h=\sum_{K\in\triH}P^{\triangle,K,j}\mu_h. 
\end{equation}

Finally, define the approximation $\lambda_H^{\Pi,j}\in\Lambda_H^\Pi$ such that
\begin{equation}\label{e:lambdamspijdef}
s\bigl((I-P^{\triangle,j})\lambda_H^{\Pi,j},(I-P^{\triangle,j})\mu_H^\Pi\bigr)
=(\rho g,T(I-P^{\triangle,j})\mu_H^\Pi)\quad\text{for all }\mu_H^\Pi\in\Lambda_H^\Pi, \end{equation}
and then let $\lambda_h^{ms,\Pi,j}=(I-P^{\triangle,j})\lambda_H^{\Pi,j}$ and $u^{ms,\Pi,j}_h=T\lambda_h^{ms,\Pi,j}$. We name this the ACMS--LSD (\emph{Approximated Component Mode Synthesis Localized Spectral Decomposition}) method. 

We now analyze the approximation error of the method, starting by a technical result essential to obtain the final estimate. 

\begin{lemma}\label{l:errorPdeltaj}
  Consider $\nu_h\in\Lambda_h$ and the operators $P^\triangle$ defined by
  ~\eqref{e:Pdelta} and $P^{\triangle,j}$ by~\eqref{e:Pjdeltadef} for $j>1$. Then
\begin{equation*}
|T(P^\triangle-P^{\triangle,j})\nu_h|_{H_\A^1(\triH)}^2
\le (c_\gamma j)^2(9\alpha_{\rm{stab}})^2e^{-\frac{[(j-1)/2]}{1+9\alpha_{\rm{stab}}}}|T\nu_h|^2_{H_\A^1(\triH)}, 
\end{equation*}
where $c_\gamma$ is as in~\eqref{e:cgammadef}. 
\end{lemma}
\begin{proof}
Let $\tilde\psi_h^\triangle=(P^\triangle-P^{\triangle,j})\nu_h=\sum_{K\in\triH}(P^{K,\triangle}-P^{K,\triangle,j})\nu_h$. For each $K\in\triH$, let $\tilde\psi_h^{K,\triangle}\in\widetilde\Lambda_h^\triangle$ be such that $\tilde\psi_h^{K,\triangle}|_e=0$ if $e$ is an edge  of an element of $\T_j(K)$ and $\tilde\psi_h^{K,\triangle}|_e=\tilde\psi_h^\triangle|_e$, otherwise. We obtain 
\begin{equation}\label{e:splitting2}
|T\tilde\psi_h^\triangle|_{H_\A^1(\triH)}^2
=\sum_{K\in\triH}\sum_{\tau\in\triH} 
s_\tau(\tilde\psi_h^\triangle-\tilde\psi_h^{K,\triangle},(P^K-P^{K,\triangle,j})\nu_h)
+s_\tau(\tilde\psi_h^{\triangle,K},(P^{K,\triangle}-P^{K,\triangle,j})\nu_h). 
\end{equation}
See that the second term of~\eqref{e:splitting2} vanishes since 
\[
\sum_{\tau\in\triH}s_\tau(\tilde\psi^{K,\triangle},(P^{K,\triangle}-P^{K,\triangle,j})\nu_h)_\dtau
=\sum_{\tau\in\triH}s_\tau(\tilde\psi^{K,\triangle},P^{K,\triangle}\nu_h)_\dtau=0.
\]
For the first term of~\eqref{e:splitting}, as in Lemma~\ref{l:decay}, 
\begin{multline*}
\sum_{\tau\in\triH}s_\tau(\tilde\psi_h^\triangle-\tilde\psi_h^{K,\triangle},(P^{K,\triangle}-P^{K,\triangle,j})\nu_h)_{\partial\tau}
\\
\le\sum_{\tau\in\T_{j+1}(K)}
|T(\tilde\psi_h^\triangle-\tilde\psi_h^{K,\triangle})|_{H_\A^1(\tau)}
|T(P^{K,\triangle}-P^{K,\triangle,j})\nu_h|_{H_\A^1(\tau)}
\\
\le3\alpha_{\rm{stab}}^{1/2}|T\tilde\psi_h^\triangle|_{H_\A^1(\T_{j+1}(K))}|T(P^{K,\triangle}-P^{K,\triangle,j})\nu_h|_{H_\A^1(\T_{j+1}(K))}.
\end{multline*}
Let $\nu_h^{K,\triangle,j}\in\tLambda^{K,\triangle,j}$ be equal to zero on all faces of elements of $\T_H\backslash\T_j(K)$ and equal to $P^{K,\triangle}\nu_h$ otherwise. Using Galerkin best approximation property,  Lemma~\ref{l:half_00} and Corollary~\ref{c:decay2}, we obtain
\begin{multline*}
|T(P^{K,\triangle}-P^{K,\triangle,j})\nu_h|_{H_\A^1({\T}_{j+1}(K))}^2
\le|T(P^{K,\triangle}-P^{K,\triangle,j})\nu_h|_{H_\A^1(\triH)}^2
\\ 
\le|T(P^{K,\triangle}\nu_h-\nu_h^{K,\triangle,j})|_{H_\A^1(\triH)}^2
\le9\alpha_{\rm{stab}}|TP^{K,\triangle}\nu_h|_{H_\A^1(\triH\backslash\T_{j-1}(K))}^2 
\\
\le9\alpha_{\rm{stab}}e^{-\frac{[(j-1)/2]}{1+9\alpha_{\rm{stab}}}}|TP^{K,\triangle}\nu_h|_{H_\A^1(\triH)}^2.
\end{multline*}
We gather the above results to obtain  
\begin{multline*}
|T\tilde\psi_h^\triangle|_{H_\A^1(\triH)}^2
\le9\alpha_{\rm{stab}} e^{-\frac{[(j-1)/2]}{2(1+2\alpha_{\rm{stab}})}}
\sum_{K\in\triH}|T\tilde\psi_h^\triangle|_{H_\A^1(\T_{j+1}(K))}|TP^{K,\triangle}\nu_h|_{H_\A^1(\triH)}
\\
\le 9\alpha_{\rm{stab}}e^{-\frac{[(j-1)/2]}{2(1+9\alpha_{\rm{stab}})}} c_\gamma j|T\tilde\psi_h^\triangle|_{H_\A^1(\triH)}
\biggl(\sum_{K\in\triH}|TP^{K,\triangle}\nu_h|_{H_\A^1(\triH)}^2\biggr)^{1/2}. 
\end{multline*}

We finally gather that 
\begin{multline*}
|TP^{K,\triangle}\nu_h|_{H_\A^1(\triH)}^2
=s(P^{K,\triangle}\nu_h,P^{K,\triangle}\nu_h)_{\partial\triH}
=s_K(P^{K,\triangle}\nu_h,\nu_h)
\\
=\int_K\A\bgrad(TP^{K,\triangle}\nu_h)\cdot\bgrad T\nu_h\,d\xx, 
\end{multline*}
and from Cauchy--Schwarz, $|TP^{K,\triangle}\nu_h|_{H_\A^1(\triH)}\le|T\nu_h|_{H_\A^1(K)}$,
we have
\[
\sum_{K\in\triH}|TP^{K,\triangle}\nu_h|_{H_\A^1(\triH)}^2\le|T\nu_h|_{H_\A^1(\triH)}^2.
\]
\end{proof}

\begin{theorem}\label{t:highconterror}
Define $u_h^\Har$ by~\eqref{e:splitpdes} and let $u^{ms,\Pi,j}_h=T(I-P^{\triangle,j})\lambda_H^{\Pi,j}$, where $\lambda_H^{\Pi,j}$ is as in~\eqref{e:lambdamspijdef}. Then
\[
|u_h^\Har-u_h^{ms,\Pi,j}|_{H_\A^1(\triH)}
\le \HH
\biggl(3(\alpha_{\rm{stab}})^{1/2}+c_\gamma j 9\alpha_{\rm{stab}}e^{-\left(\frac{[(j-1)/2]}{2(1+9\alpha_{\rm{stab}})} -\log(c_{P,G}/\HH)\right) }\biggr)\|g\|_{L_\rho^2(\Omega)}. 
\]
\end{theorem}
\begin{proof}
First, from the triangle inequality, 
\[
|u_h^\Har-u^{ms,\Pi,j}_h|_{H_\A^1(\triH)}
\le|u_h^\Har-u^{ms,\Pi}_h|_{H_\A^1(\triH)}+|u_h^{ms,\Pi}-u^{ms,\Pi,j}_h|_{H_\A^1(\triH)}, 
\]
and for the first term we use Theorem~\ref{t:msgal2}. For the second term, we define $\hat u^{ms,\Pi,j}_h=T(I-P^{\triangle,j})\lambda_h^{ms,\Pi}$, and then
\[
u^{\rm{ms}}_h-\hat u^{ms,\Pi,j}_h
=T(P-P^j)\lambda_h^{ms,\Pi}. 
\]
Relying on the Galerkin best approximation we gather from Lemma~\ref{l:errorPdeltaj} that
\[
|u_h^{\rm{ms}}-u_h^{ms,\Pi,j}|_{H_\A^1(\triH)}^2
\le|u_h^{\rm{ms}}-\hat u^{ms,\Pi,j}_h|_{H_\A^1(\triH)}^2
\le (c_\gamma j)^2(9\alpha_{\rm{stab}})^2e^{-\frac{[(j-1)/2]}{(1+9\alpha_{\rm{stab}})}}|T\lambda_h^{ms,\Pi}|_{H_\A^1(\triH)}^2.
\]
Since $u^{\rm{ms}}_h=T\lambda^{\rm{ms}}_h$ the result follow from
Lemma~\ref{l:WPI3} and the global Poincar\'e's inequality~\eqref{e:umsest}. 
\end{proof}

\begin{rem} The localization required for the a priori error estimate of the previous theorem depends just on the logarithm of the global Poincar\'e's inequality constant $c_{P,G}$ defined in~\eqref{e:umsest}. Furthermore, it is common in the literature to assume the global condition  $\amin\geq1$ and $\rho=1$, which implies $c_{P,G} = O(1)$. We can weaken this condition  by choosing $\rho(\xx) =  a_-(\xx)$ for almost all $\xx \in \Omega$ so that when the weighted Poincar\'e inequality holds~\cite{MR3047947,zbMATH06125818} then $c_{P,G} = O(1)$; a simple example of such a case is when $\Omega$ is composed of several inclusions with small permeabilities and surrounded by a material with a large permeability.
\end{rem}

\begin{rem}
  The complexity of the proposed method depends on the number of 
  eigenvalues of (\ref{e:eigenvalue}) below a threshold $\alpha_{stab}$
  associated to each edge $e$. There are several works on the literature
  discussing this issue for similar eigenvalues problems~\cite{MR2728702,MR4320895,MR4683905} and the number is related to
  the amount of channels of high permeability crossing the edge $e$.
  For the numerical experiments tested in~\cite{zbMATH07621527}, just one
  eigenvalue per edge was enough for $\HH = H$, $\HH = H/2$ and $\HH = H/4$ 
  with $\alpha_{\rm{stab}} = 1.5$ 
  \end{rem} 

\section{Spectral Multiscale Problems inside Substructures} \label{s:localproblems} 
Recall the decomposition $u_h=u_h^B+u_h^\Har$, and so far we derived a scheme that approximates $u_h^\Har$ only. From~\eqref{e:splitpdes}, we gather that $u_h^B$ is defined locally. Fixing an element $\tau \in \triH$, we
introduce a multiscale method by first building the approximation 
space $V_\tau^{\rm{ms}}:=\span\{\psi_h^1,\psi_h^2,\cdots,\psi_h^{N_\tau}\}$
generated by the following generalized eigenvalue problem: Find the eigenpairs
$(\alpha_i,\psi^i_{h})\in(\RR,V_h^B(\tau))$ such that 
\[
a_\tau(v_h, \psi^i_h) = \lambda_i (\rho v_h, \psi^i_h)_\tau   \qquad \text{for all}
\quad  v_h \in V_h^B(\tau)
\]
where
\[
a_\tau(v_h, \psi^i_h) = \int_\tau\A\bgrad v_h \cdot\bgrad \psi^i_h\,d\xx\qquad
\text{and} \qquad (\rho v_h, \psi^i_h)_\tau = \int_\tau \rho v_h  \psi^i_h\,d\xx
\]
and $0 < \lambda_1 \leq \lambda_2 \leq \cdots \lambda_{N_\tau} < 1/{\HH^2}$
  and $\lambda_{N_\tau+1} \geq 1/{\HH^2}$. Again, $\HH$ is the user defined target accuracy target. For instance $\HH = H$ or $\HH = h^r, 0<r\leq 1$. The local multiscale problem is defined by: Find $u_h^{B,\rm{ms}} \in V_h^{\rm{ms}}$ such that
\[
a_\tau(u^{B,\rm{ms}}_h,v_h) = (\rho g, v_h)_\tau \qquad \text{for all } v_h \in V_h^{B,\rm{ms}}.
\]
It then follows that 
\[
|u^B_h - u_h^{B,\rm{ms}}|_{H^1_\A(\tau)}^2 = (\rho g,u^B_h - u^{B,\rm{ms}}_h)_\tau
\leq \HH |u^B_h - u^{B,\rm{ms}}_h|_{H^1_\A(\tau)} \|g\|_{L^2_\rho(\tau)}
\]
and $|u^B_h - u^{B,\rm{ms}}|_{H^1_\A(\tau)} \leq\HH\|g\|_{L^2_\rho(\tau)}$. See~\cite[Section 4]{MR4238010} for a similar computation. 

\section{Numerical Experiment Tests}
Let $\Omega = (0,1)^2$ and consider a Cartesian mesh of $2^M \times 2^M$ coarse squares, and then subdivide each square into $2^{N-M} \times 2^{N-M}$ equal fine squares and then subdivide further into two 45-45-90 triangular elements. Denote $H = 2^{-M}$ and $h=2^{-N}$ as the sizes of the coarse and the fine elements, respectively. We remind that $\alpha_{\rm{stab}}>1$, introduced in (\ref{e:Lpmdef}), controls how fast the multiscale basis functions decays  exponentially (see Corollary \ref{c:decay2}), while $\HH$ is the target precision of the method (see Theorem~\ref{t:msgal2}), and was introduced in the definition of $\hat{S}^\tau_{ee}$ to build the generalized eigenvalue problem (\ref{e:eigenvalue}). To obtain an effective decay we should choose $\alpha_{\rm{stab}}$ close to one, while keeping the number of eigenvalues above $\alpha_{\rm{stab}}$ small. We assume the bubble solution
$u_h^B$ is computed exactly, therefore, $u_h^\Har - u_h^{ms,\Pi} = u_h - u_h^{\rm{ms}}$ and $u_h^\Har - u_h^{ms,\Pi,j} = u_h - u_h^{\rm{ms},j}$ where $u_h^{\rm{ms},j} = u_h^B +u_h^{\rm{ms},j}$.

For the first numerical test, we find a judicious choice of $\alpha_{\rm{stab}}$.
We consider $\A(\xx) = 1$ and $\rho(\xx) = 1$  and $\HH = H$ for different values of $H/h$. Table \ref{tab1} shows the seven largest generalized eigenvalues of (\ref{e:eigenvalue}). We consider the case where the coarse edge $e$ of $\btriH\backslash\partial\Omega$ is shared by the coarse elements $\tau$ and $\tau^\prime$ of $\triH$ where $\tau$ and $\tau^\prime$ do not share an edge with $\partial \Omega$. This is the coarse edge case which the eigenvalues are the largest. We choose $\alpha_{\rm{stab}}$ so that we can remove the weak corner singularities. The choices $\alpha_{max}=1.5$ and $1.3$ are good compromises between the fast decay of the  multiscale basis functions while keeping the number of selected large  eigenvalues small. When $A(\xx)$ is highly heterogeneous or has high-contrast coefficients channels crossing the coarse edge $e$, very large eigenvalues might appear on the generalized eigenvalue problem~\eqref{e:eigenvalue}, and for $\alpha_{\rm{stab}}= 1.5$ or $1.3$ these large eigenvalues will also be selected. See in Table~\ref{tab1} that the smallest eigenvalues converge fast to one. 

\begin{table}
\begin{center}
\begin{tabular}{|c|c|c|c|c|c|c|c|}\hline
  $H/h$ & ~$\alpha_{1}^e$  & ~$\alpha_{2}^e$  & ~$\alpha_{3}^e$  & ~$\alpha_{4}^e$ & ~$\alpha_{5}^e$ &  ~$\alpha_{6}^e$ &  ~$\alpha_{7}^e$\\ \hline
  $8$   &1.7196&1.1503&1.0134&1.0013&1.0001&1.0000&1.0000 \\ \hline
  $16$  &2.2140&1.3272&1.0525&1.0109&1.0015&1.0002&1.0000    \\ \hline
  $32$  &2.8065&1.5558&1.1199&1.0354&1.0078&1.0019&1.0004  \\  \hline
 
\end{tabular}\vskip3mm
\caption{The 7th largest eigenvalues of the eigenvalue problem
  (\ref{e:eigenvalue}) with $\HH = H$.}
\label{tab1}
\end{center}
\end{table}

The second numerical test is borrowed from~\cite{zbMATH07621527}. We consider the distribution of $\A(\xx)$ to be a channel on the form of a ${\bf{H}}$. We examine the exponential decay of the multiscale basis functions under the presence of large coefficient on the ${\bf{H}}$-shape region. 

We assume that $\A(\xx)$ is scalar and that $\rho(\xx) = \A(\xx)$. The distribution of $\A(\xx)$ is shown on the left of Figure~\ref{fig1}, where $\A = 100$ inside the ${\bf{H}}$-shape region and $A\ = 1$ outside. Let $N = 6$ and $M = 3$. This distribution of the coefficients $\A$ has the property that $\A(1/2,1/2)=100$ at the
  corner coarse node $\xx =(1/2,1/2)$ and $\A =1$ at the remaining corner
  coarse nodes. The right Figure~\ref{fig1} shows the (lack of) decay (in the log-normal scale) of the multiscale basis function associated to the coarse
  node $\xx=(1/2,1/2)$ without the eigenfunctions. We can see
  that this multiscale basis function does not decay fast 
  away from $\A(1/2,1/2)$ on the ${\bf{H}}$-shape region. The presence of the white holes
  in this picture is because the value of the function is closed to zero since this multiscale basis function vanishes at all coarse nodes but at $(1/2,1/2)$. 
  The reason for this non-decay is due to that this multiscale  basis function
  ``wants'' to have small energy, therefore, ``wants'' to be near one
  on the ${\bf{H}}$-shape region, where $\A$ is large. We now consider the
  adaptive case with $\alpha_{\rm{stab}} = 1.5$. Figure~\ref{fig2} shows the exponential decay  when $\HH = H$, as predicted by the theory.

\begin{figure}
\centering
\epsfig{file=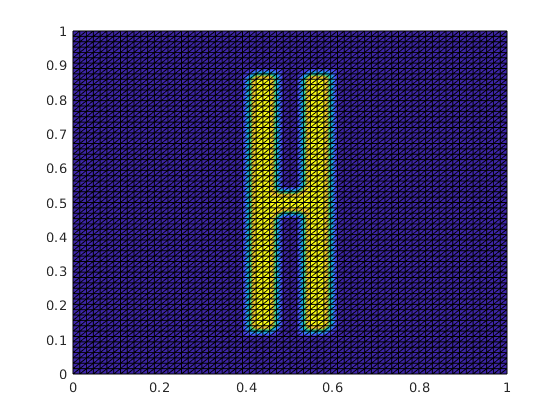, scale=0.55}
\epsfig{file=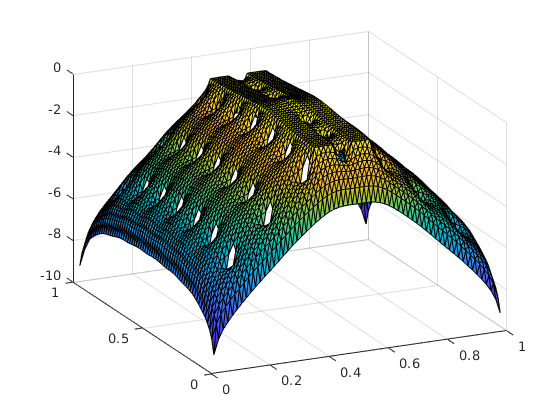, scale=0.55}
  \caption{On the left, the distribution of the coefficient for a
          $8\times 8$ subdomain decomposition. On the right, the
          log-normal plot of the multiscale basis functions without adaptivity. Note that there is no exponential decay whatsoever.} 
\label{fig1}
\end{figure}

\begin{figure}
\centering
\epsfig{file=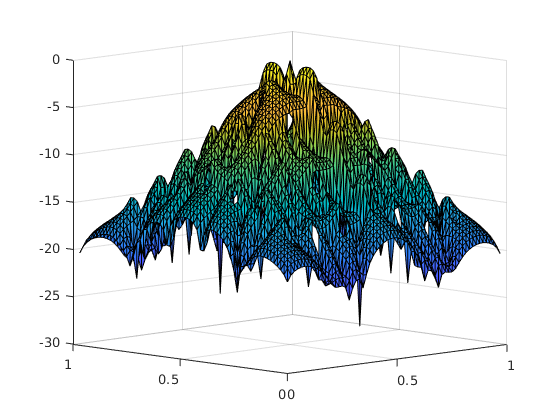, scale=0.6}
  \caption{Log-normal plot showing the  decay of a multiscale basis functions with adaptivity, for $\HH =H$.} 
\label{fig2}
\end{figure}

In the third numerical experiment we keep the same distribution of coefficients
in Figure~\ref{fig1} with $N = 6$ and $M = 3$. To make the problem
a little more complicated, we multiply $\A$ and $\rho$ in each element
by independently uniform random distributions between zero and one.
Similarly, we let $f$ to be constant in each element given by another
independently uniformly random distributions between zero and one.
Table~\ref{tab2} shows the energy errors for different values of $\HH$.
We also include the total number of edges functions required for a
$\HH$ tolerance. We take $\alpha_{\rm{stab}} = 1.5$. We note that there are $49$ interior coarse corners nodes, $112$ interior coarse edges, and $7$ fine nodes per open coarse edges. By taking
$\HH = H$, the number of edge multiscale basis functions is smaller than the
number of coarse edges. 

In Table~\ref{tab3} we provide the energy error for different values of
$\alpha_{\rm{stab}}$; by adding few edge multiscale basis functions on the
approximation space, we see a dramatic improvement on the energy error.
In Table~\ref{tab4} we refine the mesh and consider
$H=1/16$, $H/h = 16$, $\HH= H$ and compute the energy error for different values of localization $j$; here, $j=-1$ means the classical Adaptive BDDC
\cite{MR3350292}; also See that a small localization $j$ is sufficient for $\alpha_{\rm{stab}}$ sufficiently small.

\begin{table}
\begin{center}
\begin{tabular}{|c|c|c|c|c|}\hline
  $\HH$ & $|u_h - u^{\rm{ms}}_h|_{H^1_\A}$  & $\frac{|u_h - u^{\rm{ms}}_h|_{H^1_\A}}{|u_h|_{H^1_\A}}$
  & $\frac{|u_h - u^{\rm{ms}}_h|_{H^1_a}}{\|f\|_{L^2_\rho}}$  & Neigs \\ \hline
  1/8   &  0.0095     &   0.0083   &    0.0079 &  78  \\
  \hline
  1/16   &  0.0064     &   0.0056   &    0.0053  &   92      \\
  \hline
  1/32   &  0.0025    &   0.0022    &    0.0021  &   112   \\
  \hline
  1/64   &  0.0014   & 0.0012  & 0.0011 & 226 \\
  \hline
\end{tabular}\vskip3mm
\caption{The energy errors for different accuracy targets $\HH$. The last column shows Neigs (the total number of multiscale edges functions). We let $\alpha_{\rm{stab}} = 1.5$.}

\label{tab2}
\end{center}
\end{table}

\begin{table}    
\begin{center}
\begin{tabular}{|c|c|c|}\hline
 $\alpha_{\rm{stab}}$ &  $|u_h - u^{\rm{ms}}_h|_{H^1_\A}$ & Neigs  \\ \hline
  100 & 0.299   &   0  \\ \hline
  10 & 0.0964 & 8 \\ \hline
  2 & 0.0204 & 14 \\ \hline
  1.5 & 0.0107 &  79 \\ \hline 
  1.3 & 0.0084 &  103 \\ \hline
  1.2 & 0.0075 & 112\\ \hline
  1.1 & 0.0063 & 171 \\ \hline
  \end{tabular}\vskip3mm
\caption{The energy errors $\HH=H$ and different values of $\alpha_{\rm{stab}}$.}
\label{tab3}
\end{center}
\end{table}

\begin{table}
  \begin{center}
\begin{tabular}{|c|c|c|c|c|c|}\hline
  $\alpha_{\rm{stab}}(Neigs) \backslash j$ & -1 &  0 & 1 & 2  & 3 \\
  \hline
  10(24)   & 0.6913 & 0.6543   & 0.2562   & 0.0547  & 0.0305  \\
  \hline 
  3 (55)   & 0.6797 & 0.5236   & 0.2149   & 0.0378  & 0.0154 \\
  \hline
  2(416)   & 0.6562 & 0.1225   & 0.0056   & 0.0057  &  0.0057    \\
  \hline
  1.1(1286)& 0.5908 & 0.0031   & 0.0022   & 0.0022  & 0.0022   \\
  \hline
 \end{tabular}\vskip3mm
\caption{The energy errors $u_h - u_h^{\rm{ms},j}$ with $\HH=H, H=1/16, H/h=1/16$ and different values of $\alpha_{\rm{stab}}$ and localization $j$.}
\label{tab4}
\end{center}
\end{table}

\bibliographystyle{plain}
\bibliography{referencias}

\end{document}